\def\r{\tau}
\def\ch{\mbox{char }}
\def\l{\lambda}
\def\Z{{\mathbb Z}}
\def\C{{\mathbb C}}
\def\P{\mathbb P}
\def\M{{\mathcal M}}
\def\H{\mathcal H}
\def\O{\Omega}
\def\X{\mathcal X}
\def\Y{\mathcal Y}
\def\A{\mathcal A}
\DeclareMathOperator\Aut{Aut }
\DeclareMathOperator\Jac{Jac }
\DeclareMathOperator\bAut{\overline {Aut} }
\DeclareMathOperator\lcm{lcm }
\def\iso{\cong}
\def\embd{\hookrightarrow}
\newcommand\G{\bar G}
\newcommand\bG{\overline G}
\def\D{\Delta}
\def\e{\zeta}
\def\t{\tau}
\def\d{{\delta }}
\def\bC{\mathfrak \si}
\def\<{\langle}
\def\>{\rangle}
\def\r{\gamma}
\def\normal{\triangleleft}
\def\s{\sigma}
\def\bs{\bar \sigma}
\newtheorem{theorem}{Theorem}[section]
\newtheorem{thm}{Theorem}[section]
\newtheorem{prop}[theorem]{Proposition}
\newtheorem{lemma}[theorem]{Lemma}
\newtheorem{cor}[theorem]{Corollary}
\theoremstyle{definition}
\newtheorem{exa}[theorem]{Example}
\newtheorem{rem}[theorem]{Remark}
\numberwithin{equation}{section}
\begin{document}

\title{On Jacobians of curves with superelliptic components}

\author{L. Beshaj}
\address{Department of Mathematics, Oakland  University, Rochester, MI, 48309}

\email{beshaj@oakland.edu}
\thanks{}

\author{T. Shaska}
\address{Department of Mathematics, Oakland  University, Rochester, MI, 48309}
 
\email{shaska@oakland.edu}
\thanks{}

\author{C. Shor}
\address{Department of Mathematics, Western New England University, Springfield, MA 01119}
 
\email{cshor@wne.edu}
\thanks{}

\subjclass[2010]{Primary 11G10, 14K02; Secondary 11G30, 14Q05}


\maketitle

\begin{abstract}
We construct a family of non-hyperelliptic curves whose Jacobians decomposes into a product of superelliptic Jacobians. This generalizes a construction in \cite{Ya}. 
Moreover,  we investigate the decomposition of Jacobians of superelliptic curves based on their automorphisms. For a curve given by the equation $y^n=f(x^m)$, we provide a necessary and sufficient condition in terms of $m$ and $n$ for the  the Jacobian of the curve to decompose.

\end{abstract}

\section{Introduction}
Let $\X$ be a genus $g\geq 2$ smooth, irreducible projective curve, defined over an algebraically closed  field $k$ and $\pi_i : \X \to \X_i$, $1 \leq i \leq s$ be coverings to genus $g_i$ curves $\X_i$, respectively.  Denote by $\Jac  (\X)$ and $\Jac  (\X_i)$ the Jacobians of these curves respectively.  A classical question is to determine when the Jacobian $\Jac  (\X)$ is isogenous to a product of $\Jac  (\X_i)$. 
The problem has been studied in the XIX century by Legendre, Jacobi, Hermite, Klein, Kovalevskaya, Hecke, et al.  In this context, these curves were of interest due to the fact that split Jacobians give relations between abelian and elliptic integrals. Most of these efforts were focused on the case when the coverings $\pi_i: \X \to \X_i$ are Galois coverings. 

The case when genus $g(\X_i)=1$ for some $i$ and $\pi_i : \X \to \X_i$ not necessarily Galois  is especially interesting due to its implications in number theory and relations to  Lang's conjecture; see \cite{faltings-1, faltings-2} and \cite{martens}. When $g=2$ such coverings  have different ramification structures when $\deg \pi_i$ is odd or even; see \cite{b-sh} for a summary of results in genus two.   In both cases the moduli space of such covers with  fixed ramification structure can be embedded as a subvariety of $\M_2$.   

The Jacobian $\Jac  (\X)$  is said to (completely) split if it is isogenous to the product of elliptic curves.  Ekedahl and Serre  \cite{Se} posed the following questions: 
i) Is it true that for every integer $g > 1$ there exists a curve of genus $g$ with split Jacobian? 
ii) Is there a bound on the genus of a curve with split Jacobian? 
Towards answering these questions they used modular curves, and coverings of curves of genus 2 and 3.  The maximal genus of their examples was 1297. Many other authors have considered these questions as well, and in recent work, Shaska \cite{arxiv-1, arxiv-2} and  Yamauchi \cite{Ya}, have produced somewhat unusual families of curves with decomposable Jacobians.

In this paper we will focus on the cases when the coverings $\pi_i : \X \to \X_i$ are cyclic Galois coverings. Hence, these coverings induce automorphisms on $\X$.  By focusing on such curves  we have a more organized way to study the decompositions of such Jacobians starting from the automorphism group $\Aut (\X)$ of $\X$.  While we can determine the full list of groups that occur as full automorphism groups of genus $g \geq 2$, the corresponding equations of curves for each group are not known; see \cite{dec_jac}.   There is, however, a nice family of curves when such decomposition can be fully explored and the factors of such Jacobians can be fully determined. This is the class of superelliptic curves, namely the curves which can be written as affine varieties with equation $y^n = f(x)$ for some $n\geq 2$ and discriminant $\Delta (f, x) \neq 0$.  For such curves we know precisely the structure of the automorphism group $G$, the signature, the equations of the curves, and their invariants; see \cite{beshaj-2}.    
Hence, it is possible  to determine explicitly the components of such Jacobians and in some cases the moduli space of such curves in the moduli space $\M_g$. 
A complete description of curves for which such  decompositions is based on their automorphisms is intended in \cite{dec_jac}.

The second goal of the paper is to extend the family of curves introduced by Yamauchi in \cite{Ya}.  The family of curves introduced in \cite{Ya} are non-hyperelliptic curves with arbitrary large genus such that the Jacobian has hyperelliptic components.   We attempt to extend this family to curves whose Jacobians have superelliptic components. Our proof   is based on the automorphisms of curves.  We show that such decomposition based on the automorphisms of the curve induces some arithmetic condition among the orders of such automorphisms.  This condition implies that the family $F_{m, n}$ constructed in \cite{Ya} agrees with our family of curves only for the values $F_{m, 1}$ and $F_{m, 2}$.  

This paper is organized as follows. In section 2 we briefly define the Jacobian of a curve and give two classical results of Accola; see \cite{Ac1, Ac2}.  In these results it is described how one can start from a partition of a group of automorphisms of the curve $\X$  and get a decomposition of the  $\Jac  (\X)$.  Further in this section we describe results of Kani and Rosen were the previous results were generalized; see \cite{KR}.  We use such results   in our decomposition of Jacobians of the superelliptic curves  for the family of curves described in section 4. 

In section 3 we study the decomposition of Jacobians of superelliptic curves.  A superelliptic curve $\X$ is a curve with equation $y^n = f(x)$ defined over a field $k$, where $\left( \ch k, n \right)=1$ and $k$ is algebraically closed.   Such curves have the \emph{superelliptic} automorphism $\tau : (x, y) \to (x, \e_n y)$, where $\e_n$ is an $n$-th primitive root of unity and $\tau$ is central in $G:=\Aut (\X)$. 
Let $\s$ be another automorphism of $\X$ such that its projection $\bs \in \bG:=G/\< \t \>$ has order $m$.   Then $\s$ and $\s\t$ fix two subfields of the function field $k (\X)$ and therefore there are two quotient curves $\X_1:= \X /\< \s\>$ and $\X_2:= \X/ \< \s\t\>$.  We determine equations of $\X_1$ and $\X_2$ and a necessary and sufficient condition in terms of $m$ and $n$ such that $\Jac  (\X)$ is isogenous to $\Jac  (\X_1) \times \Jac  (\X_2)$. 

In section 4 we generalize a construction of Yamauchi of a family $F_{m, n}$ of non-hyperelliptic curves with decomposable Jacobians, where all components are hyperelliptic Jacobians.  Instead we construct a family $\X_{r, s}$ of curves whose Jacobians are superelliptic Jacobians.  We prove that the automorphism group of these \emph{component curves} are  cyclic or dihedral groups. Moreover, we find a necessary and sufficient condition in terms of $r$ and $s$ for this decomposition to occur. Our decomposition is based solely on the decomposition induced by the automorphisms of the curves. 

\medskip

\noindent \textbf{Notation:} Throughout this paper by $g$ we denote an integer $\geq 2$ and  $k$ denotes an algebraically closed field.  Unless otherwise noted,  by a \textit{curve} we always mean the isomorphism class of an algebraic curve defined over $k$.   The automorphism group of a curve always means the full automorphism group of the curve over $k$.  We denote the cyclic group of order $n$ by $C_n$ and the dihedral group of order $2n$ by $D_{2n}$.  $V_4$ denotes the Klein 4-group and $\e_n$ denotes an $n$-th primitive root of unity.

\section{Preliminaries}

Let $\X$ be a genus $g \geq 2$ algebraic curve defined over $\C$. We choose a symplectic homology basis for $\X$, say $ \{ A_1, \dots, A_g, B_1, \dots , B_g\},$ such that the intersection products $A_i \cdot A_j = B_i \cdot B_j =0$ and
$A_i \cdot B_j= \d_{i j},$ where $\d_{i j}$ is the Kronecker delta. We choose a basis $\{ w_i\}$ for the space of holomorphic 1-forms such that $\int_{A_i} w_j = \d_{i j}$. The matrix $\O= \left[ \int_{B_i} w_j
\right] $ is  the \emph{Riemann matrix} of $\X$ and the matrix $\left[ I \ | \O \right]$ is called the \textit{period matrix}.  The columns of the matrix $\left[ I \ | \O \right]$ form a lattice $L$ in  $\C^g$.  The complex torus  $\C^g/ L$ is called the Jacobian  of $\X$ is denoted by $\Jac  (\X)$. 

Let $\H_g$ be the \emph{Siegel upper-half space}. Then $\O \in \H_g$ and there is an injection
\[ \M_g \embd \H_g/ Sp_{2g}(\Z) =: \A_g \] where $Sp_{2g}(\Z)$ is the \emph{symplectic group}.   
A non-constant morphism  $f : A \to B$ between two Abelian varieties which is surjective and of finite kernel is called an \textbf{isogeny}. An Abelian variety is called \textit{decomposable} if it is isogenous to a product of Abelian varieties, it is \textit{simple} if it has no non-trivial Abelian subvarieties. 
An Abelian variety is called \textbf{completely decomposable} or \textbf{completely split}   if it is isogenous to a product of elliptic curves. 

A map of algebraic curves $f: \X \to \Y$ is called a \textbf{maximal covering} if it does not factor over a nontrivial isogeny.  A map $f: \X \to \Y$ induces maps between their Jacobians $f^*: \Jac  (\Y) \to \Jac  (\X)$ and $f_*: \Jac  (\X) \to \Jac  (\Y)$. When $f$ is maximal then $f^*$ is injective and $\ker (f_*)$ is connected, see \cite[p. 158]{Se-book} for details.  Hence, $\Jac  (\X) \iso \Jac  (\Y) \times A$, where $A$ is some Abelian variety.  Thus, coverings $f: \X \to \Y$ give factors of the Jacobian $\Jac  (\X)$.   Such methods have been explored for genus 2 curves by Shaska et al. in \cite{sh_01, deg3, deg5}.  They are the only examples that we know when explicit computations have been performed and the corresponding locus has been computed for non-Galois coverings. 

If the covering $f: \X \to \Y$ is a Galois covering then its monodromy group is isomorphic to   a subgroup $H$ of the automorphism group $G=\Aut (\X)$.  Hence a common procedure to produce decompositions of Jacobians is to explore the automorphism group of the curve.  

\def\bC{\textbf{C}}

Fix an integer $g\ge2$ and a finite group $G$. Let $C_1, \dots ,C_r$ be conjugacy classes $\ne\{1\}$ of $G$. Let $\textbf{C}=(C_1, \dots ,C_r)$, be an unordered tuple, repetitions are allowed. We allow $r$ to be zero, in which case $\bC$ is empty.

Consider pairs $(\X,\mu)$, where $\X$ is a curve and $\mu: G \to \Aut(\X)$ is an injective homomorphism. Mostly we will suppress $\mu$ and just say $\X$ is
a curve with $G$-action, or a $G$-curve, for short. Two $G$-curves $\X$ and $\X'$ are called equivalent if
there is a $G$-equivariant isomorphism $\X\to \X'$.

We say a $G$-curve $\X$ is \textbf{of ramification type} $(g, G,\bC)$ if  $g$ is the
genus of $\X$ and  the points of the quotient $\X/G$ that are ramified in the cover $\X\to \X/G$ can be
labelled as $p_1, \dots , p_r$ such that $C_i$ is the conjugacy class in $G$ of distinguished inertia group
generators over $p_i$ (for $i=1, \dots ,r$).  

Let $\X$ be a $G$-curve and $H < G$. Then there is a covering $\X \to \X/H$. Let the genus of $\X/H$ be denoted by $g_H$.  How is $g_H$ determined in terms of $g$ and $G$?  
Consider the following problem: let $H_1, \dots , H_r$ be subgroups of $G$ and $g_1, \dots , g_r$ the genera of the $\X/H_1 , \dots , \X/H_r$ respectively.  Is there any arithmetic relation between $g, g_1, \dots , g_r$?

Accola proved the following results which provide a method for decomposing Jacobians, see \cite{Ac1}.
\begin{thm}[\cite{Ac1}]
Let $\X_g$ be a $G$-curve, $H_i < G$, $g_i$ the genus of $\X_g/H_i$ for $i=1, \dots s$, and $g_0 = g(\X_g/G)$. Assume that $H_i \cap H_j = \{ e \}$ for all $i\neq j$.  Then,
\[  g_0 \, |G| = g - sg + \sum_{i=1}^s |H_i | \, g_i.   \]
\end{thm}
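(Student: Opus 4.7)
The plan is to compute each genus appearing in the formula as the multiplicity of the trivial representation in the $G$-action on holomorphic differentials, and then to compare the resulting character sums.

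First, set $V = H^0(\X_g, \Omega^1)$, so $\dim V = g$, and let $\chi$ denote the character of the induced representation of $G$ on $V$; in particular $\chi(e) = g$. For any subgroup $H \leq G$, pullback along $\X_g \to \X_g/H$ identifies $H^0(\X_g/H, \Omega^1)$ with $V^H$, so the genus of the quotient satisfies
$$|H|\, g(\X_g/H) \;=\; |H|\, \dim V^H \;=\; \sum_{h\in H} \chi(h),$$
via the standard identity $\dim V^H = \langle \chi|_H, \mathbf{1}_H \rangle = \frac{1}{|H|}\sum_{h\in H}\chi(h)$.

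Specializing to $H=G$ gives $|G|\, g_0 = \sum_{g'\in G}\chi(g')$, and to $H=H_i$ gives $|H_i|\, g_i = \sum_{h\in H_i}\chi(h)$. Summing the latter over $i$, the identity is counted once for each $H_i$, and the hypothesis $H_i\cap H_j = \{e\}$ for $i\ne j$ (combined with the covering condition $G = \bigcup_i H_i$ present in Accola's original formulation) implies
$$G \setminus \{e\} \;=\; \bigsqcup_{i=1}^{s} \bigl(H_i \setminus \{e\}\bigr).$$
Consequently
$$\sum_{i=1}^{s} |H_i|\, g_i \;=\; s\,\chi(e) + \sum_{h\in G\setminus\{e\}} \chi(h) \;=\; sg + \bigl(|G|\,g_0 - g\bigr),$$
which rearranges to the desired equality $g_0|G| = g - sg + \sum_i |H_i|\, g_i$.

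The proof is therefore almost entirely bookkeeping: the pairwise trivial intersection together with the covering condition produces a clean disjoint-union decomposition, and the character sums reassemble into $|G|g_0$. The only piece of geometry entering is the identification $g(\X_g/H) = \dim V^H$, valid because $\X_g\to \X_g/H$ is a finite Galois cover with group $H$. The main obstacle to watch for is precisely the tacit covering hypothesis: disjointness of the $H_i$ alone does not force the partitioning of $G\setminus\{e\}$ needed to turn the sum $\sum_i \sum_{h\in H_i}\chi(h)$ into $\sum_{g'\in G}\chi(g')$.
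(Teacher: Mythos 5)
Your proof is correct, and there is nothing in the paper to compare it against: the paper states this result with only a citation to Accola and supplies no proof of its own. Your argument via the character $\chi$ of $G$ on $H^0(\X_g,\Omega^1)$, the identification $g(\X_g/H)=\dim V^H$, and the identity $|H|\,g(\X_g/H)=\sum_{h\in H}\chi(h)$ is essentially Accola's original one, so this is the standard route. The most valuable thing you did is flag the tacit hypothesis. As printed, the theorem assumes only $H_i\cap H_j=\{e\}$ for $i\neq j$, and under that hypothesis alone the formula is false: taking $s=1$ with $H_1$ a proper nontrivial subgroup would force $|G|\,g_0=|H_1|\,g_1$, and taking $H_1=\{e\}$ would force $|G|\,g_0=g$, both absurd in general. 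The covering condition $G=\bigcup_i H_i$ (i.e., that the $H_i$ form a partition of $G$), which the paper writes down only in the companion theorem quoted immediately afterwards and in the subsection on groups with partition, is exactly what converts $\sum_i\sum_{h\in H_i}\chi(h)$ into $s\,\chi(e)+\sum_{h\neq e}\chi(h)$; your proof correctly isolates that as the crux, and with that hypothesis restored every step checks.
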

\noindent Moreover, we have the following relation among the genera; see \cite{Ac2}.
\begin{thm}[\cite{Ac2}]
Let $\X_g$ be a $G$-curve, and $H_i$, for $i=1, \dots s$, subgroups of  $G$ such that $G= \cup_i^s H_i$.  Denote by  $g_i$ the genus of $\X_g/H_i$ and by $H_{ij...k}=H_i \cap H_j \cap \dots \cap H_k$.  

 Then,
\[ g_0 \, |G| = \sum_{i=1}^s |H_i | \cdot g_i - \sum \left|H_{ij} \right| \cdot g_{ij} + \sum \left|H_{ijk}\right| \cdot g_{ijk} - \cdots - (-1)^s \sum \left|H_{12...s} \right| \cdot  g_{12..s}. \]

\end{thm}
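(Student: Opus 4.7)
The plan is to prove the formula via inclusion-exclusion applied to the natural representation of $G$ on the space of holomorphic 1-forms $V := H^0(\X_g, \Omega^1)$, which has dimension $g$. The starting observation is a classical fact: for any subgroup $H \le G$, pullback along the quotient map $\X_g \to \X_g/H$ identifies $H^0(\X_g/H, \Omega^1)$ with the subspace $V^H$ of $H$-invariants, so the genus of $\X_g/H$ equals $\dim V^H$. In particular $g_i = \dim V^{H_i}$, $g_{ij\cdots k} = \dim V^{H_{ij\cdots k}}$, and $g_0 = \dim V^G$.

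First I would invoke the standard character-theoretic projection formula: for any subgroup $H \le G$,
\[ \dim V^H \;=\; \frac{1}{|H|}\sum_{h\in H}\chi_V(h), \]
so that $|H|\cdot g_H = \sum_{h\in H}\chi_V(h)$. Taking $H=G$ gives $|G|\cdot g_0 = \sum_{g\in G}\chi_V(g)$, which is the form in which the left-hand side of the target identity will appear.

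Next I would apply inclusion-exclusion to the hypothesis $G = \bigcup_{i=1}^{s} H_i$. On indicator functions one has
\[ \mathbf{1}_G \;=\; \sum_{i}\mathbf{1}_{H_i} \;-\; \sum_{i<j}\mathbf{1}_{H_{ij}} \;+\; \cdots \;+\; (-1)^{s-1}\mathbf{1}_{H_{12\cdots s}}, \]
and multiplying by $\chi_V$ and summing over $G$, the left-hand side becomes $|G|\cdot g_0$ while each term on the right becomes $\pm\,|H_{ij\cdots k}|\cdot g_{ij\cdots k}$ by the projection formula. The resulting identity matches the statement verbatim, with the trailing sign $-(-1)^s$ agreeing with $(-1)^{s-1}$.

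The main obstacle is really just the very first step --- justifying the identification $\dim V^H = g_H$. This is classical but deserves to be isolated: holomorphic 1-forms on $\X_g/H$ pull back injectively to $H$-invariant holomorphic 1-forms on $\X_g$, and a local analysis at fixed points (or an application of the Riemann--Hurwitz formula together with a dimension count) shows surjectivity. Once this identification is in place, the remainder of the argument is pure inclusion-exclusion in the group algebra of $G$ and requires no further input from the geometry of $\X_g$; in particular, no delicate ramification bookkeeping is needed beyond what is hidden inside the identification $g_H = \dim V^H$.
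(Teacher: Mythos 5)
The paper does not prove this statement; it quotes it from Accola and refers the reader to \cite{Ac2} for details, so there is no in-paper proof to compare against. Your argument is correct and is in substance the classical one: the identification $g_H=\dim H^0(\X_g,\Omega^1)^H$ via pullback (holomorphy of the descended form at branch points being the only point needing the local check you indicate), the averaging formula $|H|\,g_H=\sum_{h\in H}\chi_V(h)$, and inclusion--exclusion on the indicator functions of the $H_i$ covering $G$, with the trailing sign $-(-1)^s=(-1)^{s-1}$ matching. The only caveat worth recording is that the character-theoretic step requires $|G|$ invertible in the base field, which is automatic in the paper's setting over $\C$.
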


See \cite{Ac2} for details.

\subsection{Decomposing the Jacobian by group partitions}

Let $\X$ be a genus $g$  algebraic curve with automorphism group $G:=\Aut (\X)$. Let $H \leq G$ such that $H = H_1 \cup \dots \cup H_t$ where the subgroups $H_i \leq H$ satisfy $H_i \cap H_j = \{ 1\}$ for all $i\neq
j$.  Then,
\[ \Jac^{t-1} (\X ) \times \Jac^{|H|} (\X / H)\,  \iso \, \Jac^{| H_1 |} (\X / H_1) \times \cdots \Jac^{| H_t | } (\X / H_t).\]
The group $H$ satisfying these conditions is called a group with partition. Elementary Abelian $p$-groups, the projective linear groups $PSL_2 (q)$, Frobenius groups, dihedral groups are all groups with partition.

Let $H_1, \dots , H_t \leq G$ be subgroups with $H_i \cdot H_j = H_j \cdot H_i$ for all $i, j \leq t$, and let $g_{ij}$ denote the genus of the quotient curve $\X/(H_i\cdot H_j)$. Then, for $n_1, \dots , n_t \in \Z$
the conditions    $ \sum n_i n_j g_{ij} =0$, $\sum_{j=1}^{t} n_j g_{ij}=0$,  imply the isogeny relation
\begin{equation} \label{dec_1}
\prod_{n_i > 0} \Jac^{n_i} (\X / H_i) \iso \prod_{n_j < 0} \Jac^{n_j} (\X / H_j).
\end{equation}
In particular, if $g_{ij}=0$ for $2\leq i < j \leq t$ and if   $ g = g_{\X/ H_2} + \dots + g_{\X / H_t}$,  then
\begin{equation} \label{dec_2}
\Jac  (\X) \iso \Jac  (\X /H_2) \times \cdots \times \Jac  (\X / H_t).
\end{equation}
The proof of the above statements can be found in \cite{KR}.   

\section{Jacobians of superelliptic curves} 

A curve $\X$ is called superelliptic if there exist an element $\tau \in G:=\Aut( \X)$ such that $\tau$ is central and $g \left(\X / \< \tau \> \right) =0$. Let $\X_g$ have affine equation  given by some polynomial in terms of $x$ and $y$ and denote by $K=k(x, y)$ the function field of $\X_g$.  Let $H=\< \tau \>$ be a cyclic subgroup of $G$ such that $| H | = n$ and $H \normal G$, where $n \geq 2$. Moreover, we assume that the quotient curve $\X_g / H$ has genus zero.   The \textbf{reduced automorphism group of $\X_g$ with respect to $H$} is called the group  $\G \, := \, G/H = \bAut (\X)$.  

Assume  $k(x)$ is the  genus zero subfield of $K$ fixed by $H$.   Hence, $[ K : k(x)]=n$. Then, the group  $\G$ is a subgroup of the group of automorphisms of a genus zero field.   Hence, $\G <  PGL_2(k)$ and $\G$ is finite. It is a classical result that every finite subgroup of $PGL_2 (k)$  is  isomorphic to one of the following: $C_m $, $ D_{2m}$, $A_4$, $S_4$, $A_5$.

The group $\G$ acts on $k(x)$ via the natural way. The fixed field of this action is a genus 0 field, say $k(z)$. Thus, $z$
is a degree $|\G|$ rational function in $x$, say $z=\phi(x)$.  

It is obvious that $G$ is a degree $n$ central extension of $\G$ and $\G$ is a finite subgroup of $PGL_2(k)$.  Hence, if we know all the possible groups that occur as $\G$ then we can determine  $G$ and the equation for $K$; see \cite{Sa2}. 
In the next Lemma we establish some basic properties of superelliptic curves, the proof of the part i)  can be found in \cite{tw1} and part ii)  in \cite{beshaj-2}.

\begin{lemma} \label{lem_1}
Let $\X_g$ be a superelliptic curve  with affine equation $y^n= f(x)$ where $\Delta (f,x) \neq 0$ and $\deg f =d >n$.  Then the following hold:

i) $\X_g$ has genus \[ g = 1 + \frac 1 2 \, \left( nd -n -d - \gcd (d, n) \right). \]
If $d$ and $n$ are relatively prime then $g = \frac {(n-1)(d-1)} 2$. 

ii) Let  $\s \in \Aut (\X_g)$ such that its projection $\bs \in \bAut (\X_g)$ has order  $m\geq 2$.  Then its equation is given as $y^n= g(x^m)$ or $y^n=x g(x^m)$ for some $g \in k[x]$. 
\end{lemma}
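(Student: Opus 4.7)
The plan is to handle the two parts separately: (i) by a direct Riemann--Hurwitz computation, and (ii) by a Kummer-theoretic normal form for $\s$ combined with the square-free hypothesis on $f$.

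For (i), I would apply Riemann--Hurwitz to the degree-$n$ projection $\pi : \X_g \to \P^1$ given by $(x,y) \mapsto x$. Since $\Delta(f,x) \neq 0$, $f$ has exactly $d$ distinct roots in $k$, and over each such root the fiber is a single totally ramified point of index $n$, contributing $d(n-1)$ to the ramification divisor. Over $x = \infty$, the local structure $y^n \sim c x^d$ determines a cyclic extension of $k((x^{-1}))$ of degree $n/\delta$ with $\delta = \gcd(d,n)$: equivalently, the identity $n \cdot w(y) = -e_w d$ for a valuation $w$ extending $v_\infty$ forces $e_w = n/\delta$, and $\sum_w e_w = n$ then gives exactly $\delta$ places over $\infty$, each contributing $n/\delta - 1$. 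Substituting into $2g-2 = -2n + d(n-1) + (n - \delta)$ yields the claimed formula; the coprime case is immediate.

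For (ii), a finite-order element of $PGL_2(k)$ is diagonalizable (its two fixed points can be sent to $0$ and $\infty$), so after a linear fractional change of coordinates on $\P^1_x$ I may assume $\bs(x) = \e_m x$. Writing a lift as $\s(y) = \sum_{i=0}^{n-1} c_i(x) y^i$ and imposing the commutation $\s\t = \t\s$, which follows from $\t$ being central in $G$, gives $\zeta_n c_i(x) = \zeta_n^i c_i(x)$ for every $i$, so $c_i = 0$ for $i \neq 1$ and hence $\s(y) = \alpha(x) y$ for some $\alpha(x) \in k(x)^\times$. The relation $\s(y)^n = f(\e_m x)$ then yields
\[
\alpha(x)^n = \frac{f(\e_m x)}{f(x)}.
\]

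The crucial step is to extract the form of $f$ from this equation. Since $\alpha \in k(x)^\times$, the right-hand side is an $n$-th power in $k(x)^\times$, so in reduced form every irreducible factor appears with multiplicity divisible by $n$. But $\Delta(f,x) \neq 0$ says both $f(x)$ and $f(\e_m x)$ are square-free, so each linear factor occurs with multiplicity exactly $\pm 1$; the only way to balance all multiplicities is for the zero sets of $f(x)$ and $f(\e_m x)$ to coincide. Equivalently, the set of roots of $f$ is a union of $\<\e_m\>$-orbits in $k$: each nonzero orbit has size exactly $m$ and contributes a factor $x^m - s^m$, while the orbit $\{0\}$ contributes the single factor $x$ and, by square-freeness, can appear at most once. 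Assembling these factors produces the two possible forms $f(x) = g(x^m)$ or $f(x) = x\, g(x^m)$. The main obstacle I anticipate is the Kummer-theoretic step pinning $\s(y)$ to a single power of $y$, which crucially relies on the centrality of $\t$ (and would fail for a generic lift of a $PGL_2$-element to a non-central extension).
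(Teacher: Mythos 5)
Your proof is correct, and it follows the same route the paper relies on: part (i) is the standard tame Riemann--Hurwitz computation for the degree-$n$ cover $x:\X_g\to\P^1$ (which the paper simply cites from \cite{tw1}), and part (ii) matches the paper's one-line sketch that $\bs$ can be normalized to $x\mapsto \e_m x$ and must permute the branch points, so that the roots of $f$ break into $\<\e_m\>$-orbits of size $m$ plus possibly $\{0\}$. Your Kummer-theoretic step showing $\s(y)=\alpha(x)y$ via the centrality of $\t$ is a welcome justification of the assertion, left implicit in the paper, that $\s$ actually preserves the branch locus; the only caveat worth recording is that diagonalizing $\bs$ requires $m$ prime to $\ch k$, an assumption the paper also makes tacitly.
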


Let $\X_g$ be a superelliptic curve and $\s \in \Aut (\X_g)$ such that its projection $\bs \in \bAut (\X_g)$ has order  $m\geq 2$.  We can choose a coordinate in $\P^1$ such that $\bs (X) = X^m$. Since $\s$ permutes the Weierstrass points of $\X$ and it has two fixed points then the equation of the curve will be $y^n= f(x^m)$ or $y^n=x f(x^m)$  as claimed in ii) above.  

Assume that $\X$ has equation 
\begin{equation}\label{first_eq}
 Y^n = f(X^m) := X^{\d m } + a_1 X^{(\d-1) m}  \dots + a_{\d-1} X^m + 1 .
\end{equation} 
We assume that $\bs$ lifts to $G$ to an element of order $m$.   Then, $\s( X, Y) \to (\e_m X, Y)$.  Denote by $\t: (X, Y) \to (X, \e_nY)$ its superelliptic automorphism. Since $\t$ is central in $G$ then $\t\s=\s\t$. 
We will denote by $\X_1$ and $\X_2$ the quotient curves $\X/\<\s\>$ and $\X/\< \t \s\>$ respectively.  The next theorem determines the equations of $\X_1$ and $\X_2$.  We denote by $K$ the function field of $\X_g$ and by  $F$ and $L$ the function fields of $\X_1$ and $\X_2$ respectively. 

\begin{thm}\label{thm_1}
Let $K$ be a genus $g\geq 2$  level $n$ superelliptic field and $F$ a  degree $m$ subfield fixed by $\s: (X, Y) \to (\e_m X, Y)$. 

i) Then,  $K=k(X, Y)$ such that 
\begin{equation}\label{first_eq}
 Y^n = f(X^m) := X^{\d m } + a_1 X^{(\d-1) m}  \dots + a_{\d-1} X^m + 1 .
\end{equation} 
for    $\D (f, x)  \neq 0$.

ii) $F = k(U, V) $ where $U=X^m$,    and $V=Y$  and 
\begin{equation}\label{ell_curve}
 V^n= f(U).
 \end{equation}

iii) There is another subfield $L= k(u, v) $ where  \[ u=X^m,  \quad v=X^i Y, \] and 
\begin{equation}\label{gen_2}
 v^n = u \cdot f(u),
 \end{equation}
for $m=\l n$ and $i=\l (n-1)$. 
\end{thm}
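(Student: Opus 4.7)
The plan is to treat the three parts in sequence: part (i) is a compatibility argument built on top of Lemma \ref{lem_1}(ii), and parts (ii) and (iii) are invariant computations paired with field-degree counts.

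For (i), Lemma \ref{lem_1}(ii) tells us that the equation of $\X$ is either $Y^n=g(X^m)$ or $Y^n=X\,g(X^m)$ for some polynomial $g$. The extra information here is the specific lift $\s:(X,Y)\mapsto(\e_m X,Y)$, which fixes $Y$. Applying $\s$ to the defining relation $Y^n=h(X)$ and using $\s(Y)=Y$ forces $h(X)=h(\e_m X)$, so $h$ must be a polynomial in $X^m$; this rules out the form $Y^n=X\,g(X^m)$. Rescaling $X\mapsto\alpha X$ and $Y\mapsto\beta Y$ then normalizes the leading and constant coefficients of $f$ to $1$, producing \eqref{first_eq}.

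For (ii), both $U=X^m$ and $V=Y$ are $\s$-invariant, so $k(U,V)\subseteq F$. On the one hand $[K:F]=|\langle\s\rangle|=m$; on the other hand $X$ is a root of $T^m-U\in k(U,V)[T]$, so $[K:k(U,V)]\leq m$. These inequalities combine to give $k(U,V)=F$, and substituting into \eqref{first_eq} yields $V^n=f(U)$.

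For (iii), since $m=\l n$ the automorphism $\t\s:(X,Y)\mapsto(\e_m X,\e_n Y)$ has order $\operatorname{lcm}(m,n)=m$, so its fixed field $L$ satisfies $[K:L]=m$. The element $u=X^m$ is visibly fixed. For $v=X^iY$ one has $(\t\s)(v)=\e_m^i\e_n\,v$, so invariance demands $\e_m^i\e_n=1$; using the identity $\e_m^\l=\e_n$ this becomes the congruence $i\equiv -\l\pmod m$, which is satisfied by $i=\l(n-1)=m-\l$. Raising $v$ to the $n$-th power then gives
\[ v^n = X^{\l(n-1)n}\,f(X^m) = (X^m)^{n-1}\,f(u) = u^{n-1}\,f(u), \]
and the same degree argument as in part (ii) ($X$ a root of $T^m-u$ over $k(u,v)$, together with $[K:L]=m$) forces $k(u,v)=L$.

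I expect the only step requiring genuine care to be the compatibility argument in (i) that eliminates the alternative form $Y^n=X\,g(X^m)$ and produces the stated normalization; parts (ii) and (iii) reduce to a short degree count on top of an elementary congruence in roots of unity that selects the exponent $i=\l(n-1)$.
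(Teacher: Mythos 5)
Your arguments for (i) and (ii) are correct and actually more complete than the paper's own proof: the paper disposes of (ii) by observing that $X^m$ and $Y$ are fixed by $\s$, which only gives $k(U,V)\subseteq F$; your degree count ($X$ is a root of $T^m-U$ over $k(U,V)$, so $[K:k(U,V)]\le m=[K:F]$) supplies the missing half, and the same count is what is needed to finish (iii).

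The genuine problem is in part (iii), and your own computation exposes it: raising $v=X^{\l(n-1)}Y$ to the $n$-th power gives, exactly as you write,
\[
v^n \;=\; X^{\l(n-1)n}\,f(X^m)\;=\;u^{\,n-1}f(u),
\]
whereas the theorem asserts $v^n=u\cdot f(u)$. These agree only for $n=2$. For $n>2$ the two equations define genuinely different cyclic covers of the $u$-line (the local exponent at $u=0$ is $n-1$ in one and $1$ in the other, and $n-1\not\equiv 1\pmod n$), and in general they have different genera: for $K\colon Y^3=X^3+1$ with $m=3$, $\l=1$, the quotient by $\<\s\t\>$ is $v^3=u^2(u+1)$, which has genus $0$, while $v^3=u(u+1)$ has genus $1$ --- the latter is the quotient by $\<\s\t^{-1}\>$, generated by $u=X^3$ and $v=XY$. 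So your proposal, as written, proves a statement different from \eqref{gen_2} and never reconciles the two. You must either change the data to $i=\l$ and the automorphism to $\s\t^{-1}$ (which does yield $v^n=u\,f(u)$), or correct the equation of $L=K^{\<\s\t\>}$ to $v^n=u^{n-1}f(u)$; as it stands the step ``and the theorem follows'' fails. The paper's proof does not notice the issue only because it never derives the equation of $L$ at all: it merely verifies $\e_m^{\l(n-1)}\e_n=1$, i.e.\ that $v$ is $\s\t$-invariant, and stops there.
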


\proof
 The proof of i) follows from the above remarks.  To show that the   subfield  $F$ is generated by $X^m$ and $ Y$ it is enough to show that it is  fixed by $\sigma$.  This is obvious. 
 
In  iii) we have to show that $u=X^m$ and $ v=X^i Y$ are fixed by $\s\t : (X, Y) \to (\e_m X, \e_n Y)$.  We have that 
$ \s \t (v) =  \s \t (X^i Y) = \e_m^{\l (n-1)} \e_n \cdot X^i Y = X^i Y$. 
It is easily checked that $\e_m^{\l (n-1)} \e_n=1$.
 \qed

\begin{center}
\begin{figure}[htpb]
\[
\xymatrix{                           
                     & K=k(X, Y) \ar@{->}[d]^{\, \, \,n}  \ar@{->}[ld]^{\, \, \,m}  \ar@{->}[rd]^{\, \, \, \, \lcm(m, n)}           &                  \\
F=k(X^m, Y)   \ar@{->}[d]^{\, \, \,n}   & k(X)  \ar@{->}[ld]^{\, \, \,m}  \ar@{->}[rd]^{\, \, \, \lcm(m, n)}  & L=k(X^m, X^iY)   \ar@{->}[d]^{\, \, \,n}   \\
k(X^m, Y^n)       &     & k( X^m, (X^i Y)^n)       \\
}
\]
\caption{The lattice of a genus $g$ superelliptic field with an extra automorphism}
\end{figure}
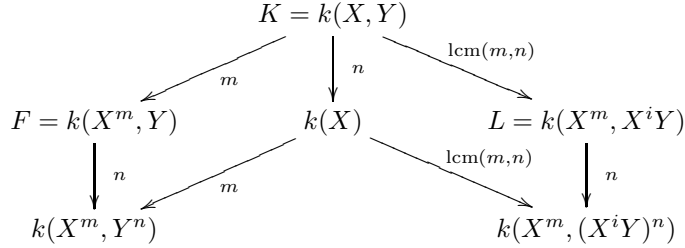
\end{center}

In the rest of this section we want to find necessary and sufficient conditions on $n$ and $m$ such that the Jacobian $\Jac  (\X_g)$ is isogenous to the product $\Jac  (\X_1) \times \Jac  (\X_2)$. First we focus on  hyperelliptic curves.

\subsection{Hyperelliptic curves}

\begin{thm} \label{lem_2}
Let $\X_g$ be a hyperelliptic curve and its reduced automorphism group $ \bAut (\X_g) \iso  C_m = \<\sigma \>$. Then $\X_g$ is isomorphic to a curve with equation
\[\X_g: Y^2=x^{\d  m} + a_1 x^{(\d-1)  m} + \dots + a_{\d-1} x^m + 1.\]
There exists subcovers $\pi_i : \X_g \to \X_i$, for $i=1, 2$ such that 
 \[ 
 \begin{split}
&  \X_1 : \quad Y^2= X^{\d} + a_1 X^{\d-1} + \dots + a_{\d-1} X + 1, \\
& \X_2 : \quad Y^2=X(  X^{\d} + a_1 X^{\d-1} + \dots + a_{\d-1} X + 1  ). \\
\end{split}
\] 
The  Jacobian of $\X$ is isogenous to the product
\[ \Jac  (\X) \iso  \Jac  (\X_1)   \times   \Jac  (\X_2) \]
%
if and only if the full automorphism group $\Aut (\X)$ is isomorphic to the Klein 4-group $V_4$. 
\end{thm}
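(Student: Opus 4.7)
By Lemma \ref{lem_1}(ii), the hyperelliptic curve $\X_g$ has an affine model of the form $Y^2=g(x^m)$ or $Y^2=xg(x^m)$; after moving the two points of $\P^1$ fixed by $\bs$ to $0$ and $\infty$ and rescaling $x$ and $y$, we obtain the stated monic form with constant term $1$. The equations of $\X_1$ and $\X_2$ then come from Theorem \ref{thm_1} with $n=2$ and $\sigma:(X,Y)\mapsto(\zeta_m X,Y)$: the fixed field of $\langle\sigma\rangle$ is $k(x^m,Y)$, giving $\X_1$, while the fixed field of $\langle\sigma\tau\rangle$ is $k(x^m,x^{m/2}Y)$, giving $\X_2$ (this last identification uses Theorem \ref{thm_1}(iii) with $\lambda=m/2$, and implicitly requires $m$ to be even, which will be forced in the relevant direction of the iff).

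\textbf{If direction.} Assume $\Aut(\X_g)\iso V_4$. Since $\tau$ is central and $V_4/\langle\tau\rangle\iso C_2$, we have $m=2$ and may write $V_4=\langle\tau,\sigma\rangle$. The three nontrivial cyclic subgroups $H_1=\langle\tau\rangle$, $H_2=\langle\sigma\rangle$, $H_3=\langle\tau\sigma\rangle$ satisfy $H_i\cap H_j=\{1\}$ for $i\neq j$ and $V_4=H_1\cup H_2\cup H_3$, so they form a Kani--Rosen partition of $V_4$. Since both $\X_g/\langle\tau\rangle$ and $\X_g/V_4$ are rational, the partition formula from Section 2.1 collapses to
\[
\Jac(\X_g)^2 \iso \Jac(\X_1)^2 \times \Jac(\X_2)^2,
\]
and Poincar\'e complete reducibility yields the desired isogeny $\Jac(\X_g)\iso\Jac(\X_1)\times\Jac(\X_2)$.

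\textbf{Only if direction.} Suppose the Jacobian decomposes. By Lemma \ref{lem_1}(i), a direct parity analysis shows $g_{\X_1}+g_{\X_2}=\delta-1$ regardless of the parity of $\delta$, while $g_{\X_g}$ equals $\delta m/2-1$ or $(\delta m-1)/2$ and satisfies $g_{\X_g}\geq\delta-1$ with equality precisely when $m=2$. Matching dimensions of Jacobians forces $m=2$, so $|\Aut(\X_g)|=4$ and $\Aut(\X_g)$ is either $V_4$ or $C_4$. The key subtlety is ruling out $C_4$: there the extension $1\to\langle\tau\rangle\to C_4\to C_2\to 1$ is nonsplit, so the only involution of $\Aut(\X_g)$ is $\tau$; hence $\langle\sigma\rangle=C_4$, the subcover $\X_g/\langle\sigma\rangle$ is rational, $\Jac(\X_1)=0$, and the decomposition fails for $g\geq 2$. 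Therefore $\Aut(\X_g)\iso V_4$. The main technical point is precisely this last step, verifying that no alternative lift of the generator of $\bAut(\X_g)$ inside a hypothetical $C_4$ can rescue a nontrivial decomposition.
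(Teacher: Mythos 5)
Your proof is correct and follows the same overall strategy as the paper (normal form from Lemma~\ref{lem_1}/Theorem~\ref{thm_1}, a dimension count to force $m=2$, and Kani--Rosen to produce the isogeny), but it differs in two worthwhile details. For the ``if'' direction the paper checks $g=g_1+g_2$ with $g_1=[\frac{g}{2}]$, $g_2=[\frac{g+1}{2}]$ and invokes the criterion \eqref{dec_2}, whereas you apply the partition isogeny $\Jac^{2}(\X)\times\Jac^{4}(\X/V_4)\iso\Jac^{2}(\X/\<\tau\>)\times\Jac^{2}(\X_1)\times\Jac^{2}(\X_2)$ and cancel the rational quotients via Poincar\'e reducibility; this avoids the genus computation entirely. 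For the ``only if'' direction the paper derives $m=2$ by consulting the table of $(\d,m,g)$ relations from the reference \cite{beshaj-2}, while you get the same conclusion directly from the genus formula in Lemma~\ref{lem_1}(i), which is cleaner and self-contained; you also explicitly rule out $\Aut(\X)\iso C_4$, a case the paper passes over silently (it is in fact excluded automatically, since the normal form $Y^2=f(x^2)$ with nonzero constant term always carries the extra involution $(x,y)\mapsto(-x,y)$, but your argument via the rationality of $\X/\<\s\>$ is a legitimate alternative). The only cosmetic caveat is your parenthetical that identifying $\X_2$ via Theorem~\ref{thm_1}(iii) needs $m$ even; since both directions of the equivalence reduce to $m=2$, this causes no gap.
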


\proof Assume $\X_g$ is a hyperelliptic curve and $\sigma \in \bar G \iso C_m$.  The equation of the hyperelliptic curve $\X_g$ is
\[\X_g: Y^2=x^{\d  m} + a_1 x^{(\d-1)  m} + \dots + a_{\d-1} x^m + 1.\]
Denote with  $\tau$ the hyperelliptic involution. We have an extra involution $\sigma \in \bar G \iso C_m$ and since extra involutions come in pairs we have $\sigma \tau \in \bar G$. The two fixed spaces of $\sigma$ and $\sigma \tau$ have equations respectively
 \[ \X_1 : \quad Y^2= X^{\d} + a_1 X^{\d-1} + \dots + a_{\d-1} X + 1\]
and
\[ \X_2 : \quad Y^2=X(  X^{\d} + a_1 X^{\d-1} + \dots + a_{\d-1} X + 1  ),\] 
where $X=x^m$.  The genera $g_1, g_2$ are  respectively $[ \frac{\d-1}2]$ and $[ \frac{\d}2].$ Assume $\d$ is even, say $\d=2k$, then we have $g= g_1+g_2=  \d-1$. When $\d$ is odd, say $\d=2k+1$ we have  $g= g_1+g_2 =\d-1$ and from \cite[Table 3]{beshaj-2}, $\d$ is as follows
$ \frac{2(g+1)}{m}$,  $\frac{2g+1}{m}$, $ \frac{2g}{m}-1$ 
and $m$ respectively   $\frac{2(g+1)}{\d}$, $ \frac{2g+1}{\d+1}$, $ \frac{2g}{\d+1}$.

Considering each case we get $m=2$ in the first case and $m$ not an integer in the other two.  Therefore, $G \iso V_4$.

Conversely, assume $G \iso V_4$. The equation of the hyperelliptic curve is 
\[\X_g: Y^2=x^{2g+2} + a_1 x^{2g} + \dots + a_g x^2 + 1. \]  
There are two extra involutions $\sigma$ and $\sigma \tau$ in $G$ such that $|\sigma|= |\sigma \tau| = 2$. They fix the following curves:
 \[ \X_1 : \quad Y^2= X^{g+1} + a_1 X^{g} + \dots + a_{g} X + 1\]
and \[ \X_2 : \quad Y^2=X(  X^{g+1} + a_1 X^{g} + \dots + a_{g} X + 1  ),\] 
where $X=x^2$. If we evaluate the genera we have respectively    $g_1=\left[\frac{(g+1)-1}2 \right] = \left[\frac{g}2 \right]$ and  $g_2=\left[\frac{(g+2)-1}2 \right] = \left[\frac{g+1}2 \right]$.  Thus $g=g_1 +g_2$.   Therefore,   Jacobian is isogenous to 
\[ \Jac  (\X) \iso  \Jac  (\X_1)   \times   \Jac  (\X_2). \]
This completes the proof. 

\qed

\subsection{Non-hyperelliptic curves}

\noindent Next, we generalize the previous theorem.

\begin{thm}\label{thm:superelliptic}
 Let $\X_g$ be a level $n$ superelliptic curve and $C_m = \< \bar \s \> \embd \bAut (\X_g)$, where  $m \geq 2$ and  the equation of $\X_g$ is  $ y^n = f(x^m)$,
with $\deg (f) = d = \d\, m$,  $d > n$.  Then there exist degree $m$ coverings $\pi : \X_g \to \X_i$, $i=1, 2$  where 
\[  \X_1 : \quad Y^n=f(X) \quad and  \quad \X_2 : \quad  Y^n= X f(X).  \]
Then,  \[ \Jac  (\X) \iso \Jac  (\X_1 )  \times \Jac  ( \X_2 ) \] 
if and only if 
\begin{equation}\label{eq_m}
  \d (n-1) (m-2) = 1 -   \left(\gcd(\d+1, n) + \gcd(\d, n) - \gcd(\d m, n)    \right).
\end{equation} 
\end{thm}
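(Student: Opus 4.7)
The plan is to reduce the isogeny question to a numerical one about genera, using the Kani--Rosen decomposition formula (equation (2.2)) already recalled in Section 2. First, I would take $H_1 = \langle \sigma \rangle$ and $H_2 = \langle \sigma \tau \rangle$ as the two subgroups of $G = \Aut(\X_g)$, whose quotients are the curves $\X_1$ and $\X_2$ described in Theorem 3.2 (with $\sigma$ here playing the role of the order-$m$ lift). Since $\tau$ is central, $H_1$ and $H_2$ commute, and their product $H_1 \cdot H_2$ contains both $\sigma$ and $\tau$; in particular it contains the superelliptic element $\tau$, so the intermediate quotient $\X_g / (H_1 H_2)$ sits below the genus-zero cover $\X_g / \langle \tau \rangle \to \P^1$ and therefore has genus $g_{12} = 0$. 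This verifies the hypotheses of the Kani--Rosen criterion, so we will have $\Jac(\X_g) \cong \Jac(\X_1) \times \Jac(\X_2)$ if and only if the dimension identity $g = g_1 + g_2$ holds.

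Next I would compute these three genera via Lemma \ref{lem_1}(i). The curve $\X_g$ has polynomial $f(x^m)$ of degree $d = \delta m$, so
\[ g = 1 + \tfrac{1}{2}\bigl( n \delta m - n - \delta m - \gcd(\delta m, n) \bigr). \]
The curve $\X_1 : Y^n = f(X)$ has polynomial degree $\delta$, hence
\[ g_1 = 1 + \tfrac{1}{2}\bigl( n \delta - n - \delta - \gcd(\delta, n) \bigr), \]
and the curve $\X_2 : Y^n = X f(X)$ has polynomial degree $\delta + 1$, hence
\[ g_2 = 1 + \tfrac{1}{2}\bigl( n(\delta+1) - n - (\delta+1) - \gcd(\delta+1, n) \bigr). \]
Substituting these into $g = g_1 + g_2$, multiplying by $2$, and collecting terms, the linear parts in $n$ and $\delta$ combine into $\delta(n-1)(m-2)$ and the constants with the gcds rearrange into exactly the right-hand side of \eqref{eq_m}.

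The forward direction of the theorem now follows: if \eqref{eq_m} holds then $g = g_1 + g_2$, the Kani--Rosen hypotheses are met (commuting subgroups with $g_{12}=0$), and (2.2) gives the required isogeny. The converse is immediate by dimensions: an isogeny $\Jac(\X_g) \sim \Jac(\X_1) \times \Jac(\X_2)$ forces $g = g_1 + g_2$, and by the algebraic manipulation above this is equivalent to \eqref{eq_m}.

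The main obstacle, and the only place that requires care, is the verification that $g_{12} = 0$, i.e.\ that $H_1$ and $H_2$ truly generate a subgroup whose quotient is rational; once $\tau \in H_1 H_2$ is observed this is automatic, but it is the linchpin that allows the Kani--Rosen isogeny to be read off directly from the genus equality. Everything else is bookkeeping with the Riemann--Hurwitz-type formula of Lemma \ref{lem_1}(i) applied to the three polynomials of degrees $\delta m$, $\delta$, and $\delta + 1$.
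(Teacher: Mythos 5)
Your proposal is correct and follows essentially the same route as the paper: the same subgroups $H_1=\langle\sigma\rangle$ and $H_2=\langle\sigma\tau\rangle$, the same genus computations via Lemma \ref{lem_1}(i) for polynomials of degrees $\delta m$, $\delta$, and $\delta+1$, and the same appeal to the Kani--Rosen criterion \eqref{dec_2} to convert the identity $g=g_1+g_2$ into the isogeny. The only difference is cosmetic: you spell out why $g_{12}=0$ (because $\tau\in H_1H_2$), a step the paper dismisses as ``easy to check.''
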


\proof
 Let $\X_g$ be a superelliptic curve with and extra automorphism of order $m \geq 2$ and equation $y^n = f(x^m)$.  There is the superelliptic automorphism 
\[ \tau :   \, \,   (x, y) \to (x, \e_n y), \quad and \quad \bar \sigma :   \, \,  (x, y) \to (\e_m x, y) .  \]
   We denote by $\sigma$ the lifting of $\bar \sigma$ in $\Aut (\X)$.   Then, $\sigma\tau=\tau\sigma$.   

Let  $H_1 : =\<  \sigma \> $ and  $H_2 : = \< \s \tau \>$ be subgroups in $G$.  Then, $| H_1 | = n$ and $ | H_2 | = \lcm (n, m)$.    Thus, we have $H:=H_1 \times H_2 \embd G$. It is easy to check that $g \left(\X_g / (H_1 H_2) \right)=0$.

Moreover,    $\sigma $ and $\sigma \tau$ fix the curves 
 \[ \X_1 : \quad Y^n= X^{\d} + a_1 X^{\d-1} + \dots + a_{\d-1} X + 1\]
and \[ \X_2 : \quad Y^n=X(  X^{\d} + a_1 X^{\d-1} + \dots + a_{\d-1} X + 1  ).\] 
Let $g_1$ and $g_2$ denoted their genera respectively.  From Lemma 3.1 we have  that
\[ g_1 = 1 + \frac 1 2 \left( n \d - n - \d - \gcd(\d, n)   \right)  \]
and 
\[ g_2 = 1 + \frac 1 2 \left( n (\d+1) - n - (\d + 1)  - \gcd(\d + 1, n)   \right).  \]
Then we have 
\[ g_1 + g_2 =  \frac 3 2 + n\d - \frac n 2 - \d - \frac 1 2 \left( \gcd( \d, n) + \gcd(\d+1, n) \right). \]
The genus of $\X$ is 
 \[ g = 1 + \frac 1 2 \left(  n \d m - n - \d m - \gcd(m \d, n)  \right). \]
Then, $g=g_1+g_2$ implies that 
\[  \d  (n-1) (m-2) = 1 -   \left(\gcd(\d+1, n) + \gcd(\d, n) - \gcd(\d m, n)    \right). \]
Thus,  from Eq.~\eqref{dec_2} we have that 
\[ \Jac  (X_g) \iso \Jac  (\X / H_1) \times \Jac  (\X / H_2) \]
which completes the proof. 

\endproof

\begin{cor}
 Let $\X_g$ be a level $n$ superelliptic curve as in Theorem~\ref{thm:superelliptic}.  Furthermore, suppose $n$ is prime.  Then $m=2$ or $m=3$.  

In particular, one of the following situations is true:
\begin{itemize}
\item $n$ is any prime, $m=2$, and $\delta \equiv 0 \text{ (mod $n$)}$;
\item $n=2$, $m=2$, and $\delta$ is odd;
\item $n=3$, $m=3$, $\delta=1$;
\item $n$ is any odd prime, $m=2$,  $\delta \not\equiv 0,-1 \text{ (mod $n$)}$.
\end{itemize}
\end{cor}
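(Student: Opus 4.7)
My plan is to analyze the Diophantine identity
\[
\delta(n-1)(m-2) = 1 - \bigl( \gcd(\delta+1, n) + \gcd(\delta, n) - \gcd(\delta m, n) \bigr)
\]
of Theorem~\ref{thm:superelliptic} under the added assumption that $n$ is prime. The crucial simplification is that each of the three $\gcd$'s now takes only the value $1$ or $n$, so the right-hand side can only assume a handful of integer values. Since $\delta$ and $\delta+1$ are consecutive integers, the pair $(\gcd(\delta,n),\gcd(\delta+1,n))$ cannot be $(n,n)$, and there remain exactly three possibilities: $(n,1)$, $(1,n)$, and $(1,1)$. In the first of these, $n\mid\delta$ forces $n\mid\delta m$ automatically; in the other two, $\gcd(\delta m,n)$ depends on whether $n\mid m$, so I would split each further. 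This yields five sub-cases in total.

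I would then treat each sub-case by evaluating the right-hand side and solving for $(\delta,m)$. Four of them go through immediately. In $(n,1)$ the right-hand side equals $0$, forcing $m=2$ since $\delta\geq 1$, which is the first bullet. In $(1,n)$ with $n\mid m$ the right-hand side is again $0$, so $m=2$ and hence $n=2$; then $\delta\equiv -1 \pmod 2$ means $\delta$ is odd, giving the second bullet. In $(1,1)$ with $n\nmid m$ the right-hand side is also $0$, so $m=2$ and $n\neq 2$, giving the fourth bullet. Finally, in $(1,1)$ with $n\mid m$ the right-hand side equals $n-1$, so $\delta(m-2)=1$, which forces $\delta=1$ and $m=3$; then $n\mid 3$ and primality of $n$ yield $n=3$, which is the third bullet.

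The sole remaining case, and the only mild obstacle, is $(1,n)$ with $n\nmid m$. Here $\gcd(\delta m,n)=\gcd(m,n)=1$ by primality of $n$, and the identity collapses to $\delta(m-2)=-1$. I would rule this out by the elementary observation that $\delta\geq 1$ and $m\geq 2$ together make the left-hand side non-negative, contradicting $-1$. Once this sub-case is eliminated, the five-way enumeration simultaneously proves both assertions of the corollary: that necessarily $m\in\{2,3\}$, and that the admissible triples $(n,m,\delta)$ are precisely those described in the four bullets.
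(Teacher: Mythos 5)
Your proof is correct and follows essentially the same route as the paper: both arguments exploit the primality of $n$ to reduce each $\gcd$ in Eq.~\eqref{eq_m} to the value $1$ or $n$, then run a case analysis (yours organized as five sub-cases on the pair $\bigl(\gcd(\delta,n),\gcd(\delta+1,n)\bigr)$ and on whether $n\mid m$, the paper's as three cases on which $\gcd$ equals $n$ with the sign constraint $\delta(n-1)(m-2)\geq 0$ doing the work of your excluded sub-case). The only cosmetic difference is that you eliminate the sub-case $\gcd(\delta+1,n)=n$, $n\nmid m$ by deriving $\delta(m-2)=-1$ directly, where the paper uses non-negativity to force $\gcd(n,m)=n$; these are the same observation.
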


\begin{proof}
If $n$ is prime, then the gcd's on the right hand side of the Eq.~\eqref{eq_m} are each either $1$ or $n$.  We consider cases below.  Note that at least one of $\gcd(n,\delta)$ and $\gcd(n,\delta+1)$ equals 1 because $n$ is prime.

If $\gcd(n,\delta)=n$, then $\gcd(n,m\delta)=n$, so \[1 + \gcd(n,m\delta) - \gcd(n,\delta) - \gcd(n,\delta+1) = 1+n-n-1=0.\]  Thus, $\delta(m-2)=0$.  But $\delta\neq0$, so $m=2.$  And since $\gcd(n,\delta)=n$, then $\delta\equiv0\text{ (mod $n$)}$.

If $\gcd(n,\delta+1)=n$, then $\gcd(n,\delta)=1$, so $\gcd(n,m\delta)=\gcd(n,m)$.  Thus, \[1 + \gcd(n,m\delta) - \gcd(n,\delta) - \gcd(n,\delta+1) = \gcd(n,m)-n.\]  Since we have $m\geq2$, we know $\delta(n-1)(m-2)\geq0$, so $\gcd(n,m)-n\geq0$, so $\gcd(n,m)\geq n$, so $\gcd(n,m)=n$.  Thus, \[1 + \gcd(n,m\delta) - \gcd(n,\delta) - \gcd(n,\delta+1) = 0,\] so $\delta(n-1)(m-2)=0$, so we again conclude $m=2$.  Since $\gcd(n,m)=n$ with $n$ prime and $m=2$, we get $n=2$.  And since $\gcd(n,\delta+1)=n$, $\delta$ is odd.

If $\gcd(n,\delta)=\gcd(n,\delta+1)=1$, then we consider $\gcd(n,m)$. If $\gcd(n,m)=1$, then again \[1 + \gcd(n,m\delta) - \gcd(n,\delta) - \gcd(n,\delta+1) = 0,\] so $m=2$ and $\delta\not\equiv 0,-1\text{ (mod $n$)}$.
If $\gcd(n,m)=n$, then \[1 + \gcd(n,m\delta) - \gcd(n,\delta) - \gcd(n,\delta+1) = n-1,\] so $\delta(m-2)=1$, so $\delta=1$ and $m=3$.  Since $\delta=1$ and $\gcd(n,\delta+1)=1\neq2$, this implies that $2\nmid n$, so $n$ is an odd prime.
\end{proof}

\begin{exa}
For triagonal curves, $n=3$.  Then either i) $m=3$ and $\delta=1$; or ii) $m=2$ and $\delta\equiv0,1\text{ (mod 3)}$.
\end{exa}

\section{Jacobians with superelliptic components}
In this section we study a family of non-hyperelliptic curves introduced in \cite{Ya}, whose    Jacobians decompose into factors which are  hyperelliptic Jacobians.  We will extend this family of curves and investigate if we can obtain in this way curves of arbitrary large genus having decomposable Jacobians. 

In \cite{Ya}  were introduced a family of curves  in $\P^{s+2}$ given by the equations 
\begin{equation}\label{curve}
\left\{
\begin{split}
zw & = c_0x^2+c_1xw+c_2w^2 \\
y_1^r & = h_1 (z, w) := z^r+ c_{1, 1} z^{r-1} w + \cdots + c_{r-1, 1} zw^{r-1} + w^r,\\
 & \dots  \\
y_s^r & = h_s (z, w) :=  z^r+ c_{1, s} z^{r-1} w + \cdots + c_{r-1, s} zw^{r-1} + w^r,  \\
\end{split}
\right.
\end{equation}
where  $c_i \in k$, $i=0, 1, 2,$ and   $c_{i,j} \in k$ for $i=1, \cdots, r$, $ j=1, \cdots , s$. 
The variety $\X_{r, s}$ is an algebraic curve since the function field of $\X_{r, s}$ is a finite extension of $k(z)$. $\X_{r, s}$ is a complete intersection. 
For a proof of the following facts see  \cite{Ya}.

\begin{rem}Let $\X_{r, s}$ be as above. Assume that $\X_{r, s}$ is smooth and $c_0\neq 0$. Then  the following hold: 

i) The genus of  $\X_{r, s}$ is  
\[ g(\X_{r, s}) = (r-1)(rs\cdot2^{s-1}-2^s+1).\] 

ii) If $r\geq3$ and $s\geq1$, then $\X_{r, s}$ is non-hyperelliptic.
\end{rem} 

Fix $r\geq 2$.  Let $\l$ be an integer such that $1  \leq \l\leq s$.    Define the superelliptic curve $C_{r, \l, m}$ as follows
\[ C_{r, \l, m}:  \qquad Y^r=\prod_{i=1}^{\l} h_{i}(X^m, 1),\]
for some $m\geq 2$.  The right side of the above equation has degree $d = rm \l$. Using  Lemma~\ref{lem_1} we have that 
\[ g( C_{r, \l, m} ) = 1 + \frac 1 2 \left(  r^2 m\l - r- m\l r - \gcd(\l r m, r)\right). \]
Hence,
\begin{equation} \label{eq_g}
g( C_{r, \l, m} ) = 1 + \frac r 2 \left((r-1)\l m -2  \right).
\end{equation}

\begin{thm}
Let $C_{r, \l, m}$ be a generic curve as above. Then the following hold

i) $\bAut \left( C_{r, \l, m} \right) \iso C_m$.

ii) $\bAut \left( C_{2, \l, m} \right) \iso D_{2m}$.

\end{thm}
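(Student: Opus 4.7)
The reduced automorphism group $\bAut(C_{r,\ell,m})$ is a finite subgroup of $PGL_2(k)$, hence isomorphic to one of $C_n$, $D_{2n}$, $A_4$, $S_4$, $A_5$ by the classification recalled in Section~3. My plan is to identify it by analysing which M\"obius transformations of $\P^1_X$ preserve the branch divisor $B$ of the map $(X,Y)\mapsto X$. Set $F(U):=\prod_{i=1}^{\ell} h_i(U,1)$ so that the defining equation reads $Y^r=F(X^m)$. In both parts one immediately gets $C_m\subseteq \bAut$ from the substitution $\bar\sigma:X\mapsto \zeta_m X$, which manifestly fixes $F(X^m)$.

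For part (ii), with $r=2$ every factor $h_i(U,1)=U^2+c_{1,i}U+1$ is palindromic, hence $F$ is palindromic and $U^{2\ell}F(1/U)=F(U)$. It follows that the assignment $(X,Y)\mapsto(1/X,\,Y/X^{m\ell})$ is a well-defined automorphism of $C_{2,\ell,m}$ that projects to $\rho:X\mapsto 1/X$ in $\bAut$. A direct computation gives $\rho\bar\sigma\rho^{-1}=\bar\sigma^{-1}$, so $\langle\bar\sigma,\rho\rangle\cong D_{2m}$ sits inside $\bAut(C_{2,\ell,m})$.

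To secure the matching upper bounds I must rule out every strict enlargement. Any finite subgroup of $PGL_2(k)$ properly containing the exhibited copy of $C_m$ is one of: a cyclic $C_{km}$ with $k\ge 2$, a dihedral group $D_{2km}$ with $k\ge 1$, or a polyhedral group $A_4$, $S_4$, $A_5$ (which can occur only for $m\le 5$). A cyclic enlargement $C_{km}$ forces $F(U)$ to be invariant under $U\mapsto \zeta_k U$, which is a proper closed condition on $(c_{j,i})$. A dihedral enlargement contributes an involution in the normaliser of $C_m$, of shape $X\mapsto c/X$; preserving $B$ is equivalent to the palindromy condition $U^{r\ell}F(c^m/U)=F(U)$, which after normalisation becomes the identities $c_{j,i}=c_{r-j,i}$ up to permutation of the factors. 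For $r\ge 3$ this cuts out a proper closed subset and hence is excluded generically, giving part (i); for $r=2$ it is automatic and yields exactly the $D_{2m}$ of part (ii). Finally, each polyhedral group has three distinguished orbits on $\P^1$ of prescribed sizes, so requiring the $C_m$-invariant divisor $B$ to be a union of them is once again a closed condition on $(c_{j,i})$.

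The most delicate step, and the main obstacle, is the dihedral case in part (ii), where palindromy of $F$ is automatic: here I must verify that for generic $c_{1,i}$ no \emph{further} enlargement of $D_{2m}$ occurs, i.e.\ the $c_{1,i}$ do not happen to satisfy the stricter identities needed to lift $D_{2m}$ to $D_{2km}$, $C_{km}$, or a polyhedral group. This reduces to a dimension count on the parameter space $\{(c_{j,i})\}$, and the delicate point is to phrase ``generic'' so that it simultaneously excludes every one of these higher-symmetry strata uniformly in $r$, $\ell$, and $m$.
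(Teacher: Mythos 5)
Your proposal is correct in strategy and is in fact considerably more self-contained than the paper's own proof, which for part (i) simply asserts that $\<\sigma\>\iso\bAut(C_{r,\l,m})$ ``if the curve is generic,'' and for part (ii) disposes of the whole claim by citing the classification of reduced automorphism groups of superelliptic curves in \cite{beshaj-2}. What you do differently is (a) exhibit the dihedral lower bound explicitly: the palindromy $U^{2\l}F(1/U)=F(U)$ of $F(U)=\prod_i(U^2+c_{1,i}U+1)$ and the lift $(X,Y)\mapsto(1/X,\,Y/X^{m\l})$, together with the check $\rho\bs\rho^{-1}=\bs^{-1}$, which is a clean and verifiable argument the paper omits; and (b) replace the citation by a direct analysis of which M\"obius transformations preserve the branch divisor, using the classification of finite subgroups of $PGL_2(k)$ already recalled in Section~3. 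This buys a proof that does not depend on the external table and makes visible exactly where $r\geq 3$ versus $r=2$ diverge (palindromy $c_{j,i}=c_{r-j,i}$ is a proper closed condition in the first case and vacuous in the second).

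The one step you flag as an obstacle --- ruling out every strict enlargement of $D_{2m}$ in part (ii) for generic $c_{1,i}$ --- is genuinely needed, but it closes more easily than you suggest, and your worry about uniformity in $r,\l,m$ is misplaced since those are fixed before genericity is invoked. For a fixed genus $g\geq 2$ the order of $\bAut$ is bounded (Hurwitz), so there are only finitely many conjugacy classes of finite subgroups of $PGL_2(k)$ that could contain your $D_{2m}$ (namely $D_{2km}$ for boundedly many $k\geq 2$, and $A_4$, $S_4$, $A_5$ when $m\leq 5$). Each such overgroup, after normalizing so that it contains the standard $\<X\mapsto\zeta_mX,\ X\mapsto 1/X\>$, forces the degree-$2\l$ polynomial $F$ to be invariant under a strictly larger group acting on its root divisor, which imposes at least one nontrivial polynomial identity on $(c_{1,1},\dots,c_{1,\l})$: for instance invariance under $U\mapsto\zeta_kU$ with $k\geq 2$ forces the multiset of roots of $F$ to be $\zeta_k$-stable, a condition violated by, say, $F$ with $\l$ distinct factors whose root pairs $\{\alpha_i,\alpha_i^{-1}\}$ are in general position. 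A finite union of proper Zariski-closed subsets of the affine parameter space $\{(c_{1,i})\}$ is proper, so the generic curve avoids all of them. The same finiteness argument disposes of the enlargements of $C_m$ in part (i). With that paragraph added, your proof is complete and, unlike the paper's, does not outsource part (ii).
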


\proof  i) The equation of $C_{r, \l, m}$ is $y^r= g(x^m)$. Then the curve has the following two automorphisms 
\[ \tau (X, Y) \to (X, \e_r Y), \quad \sigma (X, Y) = (\e_m X, Y).\]
Since $\tau$ commutes with all automorphisms then $\< \tau \> \normal \Aut (C_{r, \l, m} )$ and $C_{r, \l, m} / \< \tau \>$ is a genus 0 curve.  Then $\< \sigma \>  \embd \bAut (C_{r, \l, m}) $. If $C_{r, \l, m}$ is a generic curve then 
$\< \sigma \>  \iso \bAut (C_{r, \l, m}) $.

ii)  When $r=2$ then the equation of $C_{2, \l, m}$ is given by 
\[ y^r = \prod_{i=1}^\l (X^{2m} + a_i X^m + 1). \]
From \cite{beshaj-2} such curves have reduced automorphism group isomorphic to $D_{2m}$. This completes the proof. 
\qed

The following theorem determines the full automorphism group of such curves.

\begin{thm}
Let $C_{r, \l, m}$ be a generic curve as above and G:=$\Aut \left( C_{r, \l, m} \right)$. Then the following hold \\

\noindent a) If    $\bAut \left( C_{r, \l, m} \right) \iso C_m $, then $G \cong C_{mn}$ or   $G$ is isomorphic to 
\begin{center}
$\left\langle \r, \s \right|\r^n=1,\s^m=1,\s\r\s^{-1}=\r^l \rangle$
\end{center}
where (l,n)=1 and $l^m\equiv 1$ (mod n). But if $(m,n)=1$, then $l=n-1$. \\

\noindent  b) If    $\bAut \left( C_{r, \l, m} \right) \iso D_{2m} $, then  \\

(1) If n is odd then $G \cong D_{2m} \times C_n$.

(2) If n is even and m is odd then $G \cong D_{2m} \times C_n$ or $G$ is isomorphic to the group with presentation 
\[ \left\langle \r, \s, \t \right|\r^n=1,\s^2=\r,\t^2=\r^{n-1},(\s\t)^m=\r^{\frac{n}{2}},\s\r\s^{-1}=\r,\t\r\t^{-1}=\r \rangle. \]

(3) If n is even and m is even then $G $ is isomorphic to one of the following groups $D_{2m} \times C_n$, $D_{2mn}$, or one of the following 
\begin{align*}
\begin{split}
G_1=& \left\langle \r, \s, \t \right|\r^n=1,\s^2=\r,\t^2=1,(\s\t)^m=1,\s\r\s^{-1}=\r,\t\r\t^{-1}=\r^{n-1} \rangle,\\
G_2=& \left\langle \r, \s, \t \right|\r^n=1,\s^2=\r,\t^2=\r^{n-1},(\s\t)^m=1,\s\r\s^{-1}=\r,\t\r\t^{-1}=\r \rangle,\\
G_3=& \left\langle \r, \s, \t \right|\r^n=1,\s^2=\r,\t^2=1,(\s\t)^m=\r^{\frac{n}{2}},\s\r\s^{-1}=\r,\t\r\t^{-1}=\r^{n-1} \rangle, \\
G_4=& \left\langle \r, \s, \t \right|\r^n=1,\s^2=\r,\t^2=\r^{n-1},(\s\t)^m=\r^{\frac{n}{2}},\s\r\s^{-1}=\r,\t\r\t^{-1}=\r \rangle.
\end{split}
\end{align*}
\end{thm}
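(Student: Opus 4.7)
The plan is to classify the finite group $G = \Aut(C_{r,\l,m})$ by analyzing the short exact sequence
\[
1 \;\longrightarrow\; \langle \tau \rangle \;\longrightarrow\; G \;\longrightarrow\; \bG \;\longrightarrow\; 1,
\]
where $\tau$ is the superelliptic automorphism, of order $n := r$, which is central in $G$ by the definition of a superelliptic curve. Hence $G$ is a central extension of $\bG$ by the cyclic group $C_n$, and the proof reduces to classifying such central extensions and then writing an explicit presentation for each class.

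For part (a), $\bG = C_m = \langle \bar\sigma \rangle$. I would choose a lift $\sigma \in G$ of $\bar\sigma$, so that $G = \langle \tau, \sigma \rangle$ with $\sigma^m = \tau^k$ for some $0 \le k < n$. Replacing $\sigma$ by $\sigma\tau^j$ shifts $k$ by $mj \pmod{n}$, so the residue of $k$ in the quotient $(\Z/n\Z)/m(\Z/n\Z) \cong C_{\gcd(m,n)}$ is an isomorphism invariant of the extension; this matches the standard computation of $H^2(C_m, C_n)$. The split class $k = 0$ gives $C_n \times C_m$, while a class with $\gcd(k,n) = 1$ produces $C_{mn}$, and the displayed presentation with parameter $l$ records the remaining cyclic-by-cyclic possibilities (with $l^m \equiv 1 \pmod{n}$ coming from $\sigma^m$ centralizing $\rho$). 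When $\gcd(m,n) = 1$ the cohomology vanishes, which forces the parameter $l$ into the prescribed form.

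For part (b), I would write $D_{2m} = \langle \bar\sigma, \bar\theta \mid \bar\sigma^2 = \bar\theta^2 = (\bar\sigma\bar\theta)^m = 1 \rangle$ using two involutions whose product is a rotation of order $m$, and lift them to $\sigma, \theta \in G$. Then the triple $(a,b,c) \in (\Z/n\Z)^3$ defined by $\sigma^2 = \rho^a$, $\theta^2 = \rho^b$, $(\sigma\theta)^m = \rho^c$ (with $\rho := \tau$ a fixed generator) determines $G$ up to isomorphism. Rescaling the two lifts by powers of $\rho$ shifts $a$ and $b$ by even integers modulo $n$ and $c$ by multiples of $m$ modulo $n$, so the surviving invariants live in $(\Z/n\Z)/2(\Z/n\Z)$ twice and in $(\Z/n\Z)/m(\Z/n\Z)$. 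The trichotomy in the theorem then reflects the parities of $n$ and $m$: when $n$ is odd all three invariants are killed and $G \cong D_{2m} \times C_n$; when $n$ is even and $m$ odd only the parity of $b$ survives, giving the single extra presentation; and when both are even all three invariants contribute, producing $G_1, \dots, G_4$ together with $D_{2m} \times C_n$ and $D_{2mn}$.

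The main obstacle is the bookkeeping in part (b): distinct triples $(a,b,c)$ can describe isomorphic groups once we allow the swap $\sigma \leftrightarrow \theta$ or conjugation by elements of $G$, so care is needed to show that the listed presentations both exhaust every class and are pairwise non-isomorphic. I would settle this either by computing $H^2(D_{2m}, C_n)$ directly via the Lyndon--Hochschild--Serre sequence of $1 \to C_m \to D_{2m} \to C_2 \to 1$ (whose $C_2$-action on $C_m$ by inversion controls which classes collapse), or by the explicit cocycle manipulation sketched above. Finally one must exhibit an automorphism of $C_{r,\l,m}$ realizing each abstract class for suitable coefficients, which in part (b) can be done by writing down automorphisms of the form $(X,Y) \mapsto (\zeta X^{\pm 1}, \zeta' Y)$ acting on the defining equation $y^2 = \prod_{i=1}^{\l}(x^{2m} + a_i x^m + 1)$.
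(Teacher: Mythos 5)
Your starting point---that $G$ sits in a degree-$n$ extension $1 \to \langle\tau\rangle \to G \to \bG \to 1$ determined by the superelliptic automorphism---is exactly the paper's; but the paper stops there and simply quotes the classification of such extensions from \cite{Sa1} (Theorems 3.2 and 3.3), whereas you try to carry the classification out. Your attempt has a genuine gap: you insist throughout that the extension is \emph{central}. If $\tau$ is central, then every lift commutes with it, so in part (a) the group $G=\langle\tau,\sigma\rangle$ is abelian and you can never reach the non-abelian metacyclic groups with $\sigma\rho\sigma^{-1}=\rho^{l}$, $l\neq 1$, that the statement lists; likewise in part (b)(3) the groups $G_1$ and $G_3$ contain the relation $\theta\rho\theta^{-1}=\rho^{n-1}$, which is impossible in a central extension once $n>2$. (Your own parenthetical remark that $l^m\equiv 1 \pmod n$ ``comes from $\sigma^m$ centralizing $\rho$'' is the compatibility condition for a \emph{non}-central metacyclic extension, so the proposal is internally in tension on this point.) What is actually used in the cited classification is only that $\langle\tau\rangle$ is normal in $G$; one must therefore first enumerate the possible conjugation actions $\bG \to \Aut(C_n)$ and then compute $H^2(\bG, C_n)$ for each resulting module structure, not just the trivial one. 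Your Lyndon--Hochschild--Serre computation would have to be redone with these twisted coefficients, and the list of invariants in part (b) changes accordingly.

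A second, smaller gap: even with the cohomology done correctly you only obtain a list of candidate groups, and deciding which classes are actually realized by automorphisms of $C_{r,\lambda,m}$ requires the ramification data of the cover $\X \to \X/G$ and a Riemann--Hurwitz compatibility check---this is precisely the content of the theorems the paper cites. Your closing sentence gestures at realizability, but exhibiting maps of the form $(X,Y)\mapsto(\zeta X^{\pm 1},\zeta' Y)$ only shows that \emph{some} extension occurs; it does not show that every listed presentation occurs for suitable coefficients, nor that no others do. As written, the proposal neither reproduces the paper's one-line citation-based proof nor closes the classification on its own.
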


\proof The full automorphism group $G$ of such curves is a degree $n$ central extension of $C_m$ or $D_{2m}$.  Such extensions were determined 
in \cite[Thm.~3.2]{Sa1} and \cite[Thm.~3.3]{Sa1}.

\qed

\subsection{Decomposition of Jacobians}

In \cite{Ya} it is proved that the Jacobians of $C_{2, \l, 2}$, defined over any algebraic number field $k$,  curves are isogenous to a product of superelliptic Jacobians. A similar theorem was suggested for curves $C_{r, \l, 2}$ and was remarked that the proof was similar to the case $C_{2, \l, 2}$ curves. The proof is of arithmetic in nature and is based on K\"uneth's formula, the Frobenius map on $Gal (\bar k / k)$, Chebotarev's theorem, and Faltings theorem.  

We will generalize such theorems for curves $C_{r, \l, m}$ over an algebraically closed field $k$ of characteristic relatively prime to $r$. Our proof is based solely on automorphisms of curves and it is much simpler than in \cite{Ya}.

Let  $\X_{r, s}$ be a generic algebraic  curve  defined over an algebraically closed field  $k$   and $C_{r, \l, m}$ as above. Then we have the following.

\begin{thm}
The Jacobian $\Jac  (\X_{r, s})$ is isogenous to the product of the $C_{r, \l, m}$, for $1 \leq \l \leq s$, namely 
\[  \Jac (\X_{r, s}) \iso  \prod_{1 = \l}^s   \Jac (C_{r, \l, m}),   \] 
if and only if 
\begin{equation} r=4\cdot\frac{1+s-2^s}{ms(s+1)-s\cdot2^{s+1}}\label{num-thy-eqn}.
\end{equation}
\end{thm}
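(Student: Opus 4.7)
The plan is to apply the Kani--Rosen isogeny formula \eqref{dec_2} to a family of subgroups of $\Aut(\X_{r,s})$ whose quotients realize the component curves $C_{r,\ell,m}$. Both directions of the biconditional are pinned to the numerical identity
\[ g(\X_{r,s}) = \sum_{\ell=1}^{s} g(C_{r,\ell,m}). \]
Using the formula $g(\X_{r,s}) = (r-1)(rs\cdot 2^{s-1} - 2^s + 1)$ from the preceding remark together with \eqref{eq_g}, a direct computation gives
\[ \sum_{\ell=1}^{s} g(C_{r,\ell,m}) = s(1-r) + \tfrac{r(r-1)ms(s+1)}{4}. \]
Equating this to $g(\X_{r,s})$, dividing by $r-1$, and clearing denominators produces exactly \eqref{num-thy-eqn}. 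This already yields the necessity direction, since an isogeny of abelian varieties forces agreement of dimensions.

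For sufficiency, I would exhibit the required subgroups explicitly. The superelliptic automorphisms $\tau_i\colon y_i \mapsto \zeta_r y_i$ (fixing the remaining coordinates) generate a subgroup $T \iso C_r^s$ of $\Aut(\X_{r,s})$. The quadric $zw = c_0 x^2 + c_1 xw + c_2 w^2$ contributes an additional automorphism $\iota$ induced by an involution (or, after specialization of the coefficients $c_0,c_1,c_2$, a higher-order symmetry) that lifts to $\X_{r,s}$. For each $\ell \in \{1,\dots,s\}$ I would define $H_\ell$ to be the subgroup generated by $\{\tau_i : i\neq \ell\}$ together with an element of the form $\iota \cdot \tau_1^{a_1}\cdots \tau_\ell^{a_\ell}$, with the exponents chosen so that the fixed field of $H_\ell$ in $k(\X_{r,s})$ is generated by a coordinate $X$ on the quotient of the conic and by $Y = y_1 \cdots y_\ell$. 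Since $Y^r = \prod_{i=1}^{\ell} h_i(z,w)$ and the parametrization of the conic expresses $z$ (modulo scaling) as a degree-$m$ polynomial in $X$, this identifies $\X_{r,s}/H_\ell$ with $C_{r,\ell,m}$. For distinct indices $\ell \neq \ell'$, the composite $H_\ell \cdot H_{\ell'}$ contains all of $T$ together with $\iota$, so $\X_{r,s}/(H_\ell H_{\ell'})$ is dominated by the rational conic and hence has genus zero. Combined with the genus identity, all hypotheses of Kani--Rosen in \eqref{dec_2} are met and one concludes
\[ \Jac(\X_{r,s}) \iso \prod_{\ell=1}^{s} \Jac(C_{r,\ell,m}). \]

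The main obstacle is the explicit construction of the $H_\ell$, especially accommodating the parameter $m$. Yamauchi's original setup corresponds to $m=2$ because the natural involution of the conic has degree $2$; realizing $m\geq 3$ requires either imposing specializations on the coefficients $c_0, c_1, c_2$ so that the conic acquires a $C_m$-action, or modifying the first defining equation of $\X_{r,s}$. This specialization is presumably why, as remarked in the introduction, the present family agrees with $F_{m,n}$ of \cite{Ya} only for $n=1$ and $n=2$. Once the subgroups $H_\ell$ are correctly specified, verifying the genus-zero condition on the pairwise composite quotients is routine, so the delicate point is really the group-theoretic construction and the precise matching of the resulting quotient equations with the $C_{r,\ell,m}$.
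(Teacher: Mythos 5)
Your overall strategy---reduce the isogeny to the genus identity $g(\X_{r,s})=\sum_{\ell=1}^{s}g(C_{r,\ell,m})$ via the Kani--Rosen relation \eqref{dec_2}, and observe that this identity is equivalent to \eqref{num-thy-eqn}---is exactly the paper's, and your arithmetic (the sum $s(1-r)+\tfrac{r(r-1)ms(s+1)}{4}$ obtained from \eqref{eq_g}, then equating with $(r-1)(rs\cdot2^{s-1}-2^s+1)$ and rearranging) reproduces the paper's computation verbatim. Your necessity argument, that an isogeny forces equality of dimensions of the abelian varieties and hence of the genera, is in fact cleaner than anything the paper states explicitly for the ``only if'' direction.

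The gap is the one you flag yourself: the sufficiency half is not proved, because the subgroups $H_\ell$ are never actually constructed. The paper takes a blunter route here: it sets $H_i=\langle\sigma_i\rangle$ where $\sigma_i$ multiplies the single coordinate $y_i$ by $\e_r$ and fixes the rest, asserts that $\X_{r,s}/\langle\sigma_i\rangle$ is the $i$-th component curve, notes $H_i\cap H_j=\{e\}$, and invokes the partition/Kani--Rosen machinery. (Whether those quotients really are the $C_{r,\ell,m}$---rather than copies of $\X_{r,s}$ with the $i$-th superelliptic equation deleted---and whether pairwise trivially intersecting cyclic subgroups constitute a partition of $G$, the paper does not verify; on this point its proof is no more complete than yours.) Your structural observation is the substantive one: the conic $zw=c_0x^2+c_1xw+c_2w^2$ only supplies a degree-$2$ map to the $X$-line, so producing quotients of the form $Y^r=\prod_{i\leq\ell}h_i(X^m,1)$ with $m\geq3$ requires specializing or replacing the first defining equation, and neither your sketch nor the paper exhibits, for general $m$, subgroups of $\Aut(\X_{r,s})$ whose quotients are the $C_{r,\ell,m}$ and whose pairwise products give genus-zero quotients. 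Until that is supplied, both arguments establish only that \eqref{num-thy-eqn} is equivalent to the genus count and hence necessary for the isogeny, not that it is sufficient.
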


\proof
We denote by $\s_i (x, y_i, z) \to (x, \e_r y_i, z)$, for $i=1, \dots , s$.  Then the quotient spaces $\X_{r, s} / \< \s_i \>$ are  the curves $C_{r, i, s}$, for  $i=1, \dots , s$.  Since $\s_i$ is a central element in $G = \Aut (\X_{r, s} )$ then $H_i := \< \s_i \> \normal G$, for all $i=1, \dots , s$.  Obviously, for all $i \neq j$ we have $H_i \cap H_j = \{ e \}$.  Hence, $H_1, \dots , H_s$ forms a partition for $G$.

The genus for every $C_{r, i, s}$, by Lemma~\ref{lem_1} is given by Eq.~\eqref{eq_g}.   Then we have 
\[
\begin{split}
\sum_{\l=1}^s \, g \left( C_{r, \l, m} \right)  & = \sum_{\l=1}^s \left( 1 + \frac r 2 \left((r-1)\l m -2  \right)  \right)  \\
& = s(r-1) \left( \frac r 4 m (s+1) -1    \right).   \\
\end{split}
 \]
From the results of Eq.~\eqref{dec_2}  we have that 
\[  \frac r 4  m s (s+1) - s   =  rs \cdot 2^{s-1} - 2^s +1.  \]
Hence, 
\[ r = 4 \cdot \frac {1+s-2^s} {ms(s+1)- s \cdot 2^{s+1}}.\]
This completes the proof.
\qed

\begin{rem}
For $m=2$ this result is the case of Theorem~4.2 in \cite{Ya}. We get  $r= \frac  2 s$. Hence, $s=1$ or $s=2$. 
Therefore,    Theorem~4.2 in \cite{Ya} is true only for curves $F_{m, 1}$ or $F_{m, 2}$. 
\end{rem}

Next we determine integer combinations of $r,m$, and $s$ that satisfy   Eq.~\eqref{num-thy-eqn}.  First we need the following lemma.

\begin{lemma}
For integers $a, n$ with $n>1$, suppose $a^n\equiv1\text{ (mod }n)$.  Let $p$ be the smallest prime divisor of $n$.  Then $a\equiv1\text{ (mod }p)$.
\end{lemma}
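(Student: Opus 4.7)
The plan is to exploit the order of $a$ modulo $p$ together with the minimality of $p$ as a prime divisor of $n$. First I would note that the hypothesis $a^n \equiv 1 \pmod{n}$ forces $\gcd(a,n) = 1$, so in particular $\gcd(a,p) = 1$ since $p \mid n$. Thus $a$ has a well-defined multiplicative order $d$ modulo $p$.

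Next I would pin down $d$ by two divisibility constraints. Reducing $a^n \equiv 1 \pmod n$ modulo $p$ yields $a^n \equiv 1 \pmod p$, so $d \mid n$. On the other hand, Fermat's little theorem gives $a^{p-1} \equiv 1 \pmod p$, so $d \mid p-1$. Together, $d \mid \gcd(n, p-1)$.

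The key observation, which I expect to be the heart of the argument, is that $\gcd(n, p-1) = 1$. Indeed, every prime divisor of $p-1$ is strictly smaller than $p$, while $p$ is by assumption the \emph{smallest} prime divisor of $n$; hence no prime can divide both $n$ and $p-1$. This forces $d = 1$, i.e., $a \equiv 1 \pmod p$, completing the proof. No genuine obstacle arises; the only subtlety is remembering to invoke the minimality of $p$ precisely at the gcd step.
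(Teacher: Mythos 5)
Your proof is correct and follows essentially the same route as the paper: reduce modulo $p$, let $d$ be the order of $a$ modulo $p$, note $d \mid n$ and $d \mid p-1$ by Fermat, and use the minimality of $p$ to force $d=1$. Your phrasing via $\gcd(n,p-1)=1$ is just a minor repackaging of the paper's observation that the only divisor of $n$ less than $p$ is $1$.
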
 

\begin{proof}
Suppose $a^n\equiv1\text{ (mod }n)$ for integers $a,n$ with $n>1$.  Let $p$ be the smallest prime divisor of $n$.  Since $a^n\equiv1\text{ (mod }n)$, then $a^n\equiv1\text{ (mod }p)$.  Let $d$ be the order of $a$ modulo $p$.  Since $a^{p-1}\equiv1\text{ (mod }p)$, $d$ divides $p-1$, so $d<p$.  And since $a^{n}\equiv1\text{ (mod }p)$, $d$ divides $n$.  However, since $p$ is the smallest prime divisor of $n$, the only divisor of $n$ which is less than $p$ is 1.  Hence, $d=1$, so the order of $a$ modulo $p$ is 1, so $a\equiv1\text{ (mod }p)$, as desired.

\end{proof}


\begin{prop}Suppose $r,m,s\in\mathbb{N}$ satisfy Eq.~\ref{num-thy-eqn}.  Then $mrs=4k$ for some odd integer $k$. Moreover, 

i) If $s\equiv1\text{ (mod 2)}$, then $s=1$.

ii) If $s\equiv2\text{ (mod 4)}$, then $s=2t$ for some odd integer $t$ which satisfies $4^t\equiv1\text{ (mod }t)$.  Furthermore, $t$ is a multiple of 3.

iii) If $s\equiv0\text{ (mod 4)}$, then $s=4u$ for some odd integer $u$ which satisfies $16^u\equiv1\text{ (mod }u)$.  Furthermore, $u$ is a multiple of 3 or 5.
\end{prop}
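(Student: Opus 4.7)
The idea is to replace Equation (\ref{num-thy-eqn}) by the equivalent polynomial identity
\[ (s+1)(mrs - 4) \;=\; 2^{s+1}(rs - 2), \]
obtained by clearing denominators and regrouping. All four conclusions should follow from this single identity, by combining two independent reductions: a congruence obtained by reducing modulo $s$, and a comparison of $2$-adic valuations on the two sides.

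For the case-by-case parts (i)--(iii), the driving input is the mod-$s$ reduction. Since $s+1\equiv 1$, $mrs\equiv 0$, and $rs\equiv 0$ modulo $s$, the identity collapses to $-4\equiv -2^{s+2}\pmod s$, i.e.,
\[ s \,\mid\, 4\bigl(2^{s}-1\bigr). \]
If $s$ is odd, $\gcd(s,4)=1$ gives $s\mid 2^s-1$; the lemma applied with $a=2,\, n=s$ then forces the smallest prime divisor of $s$ to divide $2-1=1$, which is impossible unless $s=1$. If $s=2t$ with $t$ odd, dividing by $\gcd(s,4)=2$ yields $t\mid 4^t-1$, and the lemma with $a=4$ forces the smallest prime of $t$ to divide $3$, so $3\mid t$. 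If $s=4u$ with $u$ odd (itself a consequence of the $2$-adic part below), the same manipulation yields $u\mid 16^u-1$, and the lemma with $a=16$ forces the least prime of $u$ to divide $15$, so $3\mid u$ or $5\mid u$.

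For the first assertion $mrs=4k$ with $k$ odd, one reads the identity $2$-adically. Writing $v$ for $v_2$, it gives
\[ v(s+1) + v(mrs-4) \;=\; s+1 + v(rs-2). \]
Using that $s+1<2^s$ for $s\ge 2$ (so $v(s+1)<s$), if $v(mrs)\le 1$ then $v(mrs-4)=v(mrs)\le 1$ and the left side is strictly less than $s+1$, contradicting the right side. If $v(mrs)\ge 3$, then $v(mrs-4)=v(4)=2$, so $v(s+1)\ge s-1$; combined with $v(s+1)<s$ this pins $v(s+1)=s-1$ and via $s+1\ge 2^{s-1}$ forces $s\in\{2,3\}$. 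These two residual cases are disposed of directly: $s=2$ fails $v(3)=s-1$, and $s=3$ is already ruled out by the mod-$s$ step since $3\nmid 2^3-1=7$. Hence $v(mrs)=2$, establishing the first claim. For part (iii) this further pins down $v_2(s)$: since $v_2(s)\le v_2(mrs)=2$, the hypothesis $4\mid s$ forces $v_2(s)=2$, legitimizing the form $s=4u$ with $u$ odd used above.

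The main obstacle is the $2$-adic bookkeeping: the equality $v(mrs-4)=\min(v(mrs),2)$ holds only when $v(mrs)\ne 2$, so the argument must keep the three ranges $v(mrs)<2$, $v(mrs)=2$, $v(mrs)>2$ strictly separate. Once that subtlety is handled and the small boundary cases $s=2,3$ are disposed of, the proof reduces to a clean combination of the single congruence $s\mid 4(2^s-1)$ with three successive applications of the lemma.
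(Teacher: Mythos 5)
Your proof is correct, and for parts (i)--(iii) it follows essentially the same route as the paper: both arguments reduce to the single congruence $s \mid 4(2^s-1)$ (the paper gets it by noting that $s$ divides the denominator of the fraction defining $r$, you by reducing the cleared identity $(s+1)(mrs-4)=2^{s+1}(rs-2)$ modulo $s$), and both then strip off the power of $2$ and apply the lemma with $a=2$, $4$, $16$ in the three respective cases. Where you genuinely diverge is the claim $mrs=4k$ with $k$ odd. The paper reduces the cleared equation modulo $8$ for even $s$ (using that $2^{s+1}$ and $4\cdot 2^s$ vanish mod $8$ once $s\geq 2$, so $rms(s+1)\equiv 4 \pmod 8$ with $s+1$ odd), and separately excludes $8\mid s$ by a parity argument on $1+8v-2^{8v}$; you instead run a $2$-adic valuation comparison across the identity, which yields $v_2(mrs)=2$ in one stroke and gives the exclusion of $8\mid s$ for free via $v_2(s)\leq v_2(mrs)=2$, at the cost of the $v_2(mrs-4)$ bookkeeping and the boundary cases $s=2,3$ (both of which you dispose of correctly). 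One shared loose end worth noting: at $s=1$ the displayed formula for $r$ is $0/0$ when $m=2$, and the cleared identity then forces only $m=2$ while leaving $r$ free, so $mrs=2r$ need not be $4\cdot(\text{odd})$; the paper asserts $m=r=2$ there without justification, and your valuation argument is explicitly restricted to $s\geq 2$, so neither proof truly covers this degenerate case --- but that is a defect of the statement as posed, not a gap introduced by your argument.
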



\begin{proof}
We first determine possible values of $s$ for which $r$, which is given by the equation \[ r = 4 \cdot \frac {1+s-2^s} {ms(s+1)- s \cdot 2^{s+1}},\] can be an integer.  We proceed in cases, considering the highest power of 2 that divides $s$.  In particular, the powers that we consider are 0, 1, 2, and then at least 3.  


First, suppose $s$ is an odd integer.  Since $s$ divides the denominator, it follows that $s$ divides $4(1+s-2^s)$.  Since $\gcd(s,4)=1$, we conclude that $2^s\equiv 1\text{ (mod $s$)}$.  If $s=1$, then this congruence is satisfied.  Now, suppose $s>1$ and let $p$ be the smallest prime divisor of $s$.  By the Lemma above, one has $2\equiv 1\text{ (mod $p$)}$, so $p$ divides 1, which is impossible.  Thus, if $s$ is odd, then $s=1$.

Also, note that in this case if $s=1$, then one finds that $m=2$ and $r=2$.

Next, suppose $s=2t$ for some odd integer $t$.  Substituting in, we get \[ r = 4 \cdot \frac {1+2t-2^{2t}} {m(2t)(2t+1)- 2t \cdot 2^{2t+1}}=2\cdot\frac{1+2t-4^t}{mt(2t+1)-2t\cdot4^{t}}.\]  As above, since $t$ divides the denominator, $t$ divides $2(1+2t-4^t)$.  Since $\gcd(t,2)=1$, we conclude that $4^t\equiv 1\text{ (mod $t$)}$.  If $t=1$, then this congruence is satisfied.  Now, suppose $t>1$ and let $p$ be the smallest prime divisor of $t$.  By the Lemma above, one has $4\equiv1\text{ (mod $p$)}$, which implies $p$ divides 3, so $p=3$.  Thus, $s=2t$ for some integer $t$ which is either $1$ or is a multiple of 3 and which satisfies $4^t\equiv1\text{ (mod $t$)}$.  

Next, suppose $s=4u$ for some odd integer $u$.  Substituting in, we get \[ r = 4 \cdot \frac {1+4u-2^{4u}} {m(4u)(4u+1)- 4u \cdot 2^{4u+1}}=\frac{1+4u-16^u}{mu(4u+1)-2u\cdot16^{u}}.\]  As above, since $u$ divides the denominator, $u$ divides $(1+4u-16^u)$.  Thus, $16^u\equiv 1\text{ (mod $u$)}$.  If $u=1$, then this congruence is satisfied.  Now, suppose $u>1$ and let $p$ be the smallest prime divisor of $u$.  By the Lemma above, one has $16\equiv1\text{ (mod $p$)}$, which implies $p$ divides 15, so $p=3$ or $p=5$.  Thus, $s=4u$ for some integer $u$ which is either $1$ or is a multiple of either 3 or 5 and which satisfies $16^u\equiv1\text{ (mod $u$)}$.  

Finally suppose $s=8v$ for some (even or odd) integer $v$.  Substituting in, we get \[ r = 4 \cdot \frac {1+8v-2^{8v}} {m(8v)(8v+1)- 8v \cdot 2^{8v+1}}=\frac{1+8v-2^{8v}}{2mv(8v+1)-2v\cdot2^{8v+1}}.\]  Thus, since $2$ divides the denominator, $2$ divides $(1+8v-2^{8v})$, so $2^{8v}\equiv 1\text{ (mod 2)}$, which occurs only if $v=0$.  Thus, $s=0$.

To show $mrs=4k$ for some odd integer $k$, we have two cases to consider.  If $s$ is odd, then $s=1$, so $m=r=2$ and thus $mrs=4$.  If $s$ is even, then we clear denominators of Eq.~\ref{num-thy-eqn} and consider the equation modulo 8 to get 
\[rms(s+1)-rs2^{s+1}\equiv 4\cdot(1+s-2^s)\text{ (mod $8$)}.\]  
Since $s$ is even and thus at least $2$, this simplifies to $rms\equiv4\text{ (mod $8$)}$, so $mrs=4k$ for some odd integer $k$.
\end{proof}

\begin{rem}
i) When $s=2t$ as in the proof of the theorem, then the first few integer values of $t$ are $1, 3, 9, 21, 27, 63, 81, 147, 171, 189, 243$.  This is sequence A014945 in the Online Encyclopedia of Integer Sequences.

ii) When $s=4u$ as in the proof of the theorem, then  the first few integer values of $u$ are $1, 3, 5, 9, 15, 21, 25, 27, 39, 45, 55, 63, 75, 81, 105, 117$.  This is sequence A014957 in the Online Encyclopedia of Integer Sequences.
\end{rem}

We then search for values of $s$ that satisfy the above proposition.  Searching among $1\leq s < 500$, we find the following possible values for $s$.  \begin{align*}\{1, 2, 4, 6, 12, 18, 20, 36, 42, 54, 60, 84, 100, 108, 126, 156, 162, \\ 180, 220, 252, 294, 300, 324, 342, 378, 420, 468, 486, \dots \}.\end{align*}  We now check each of these to find corresponding integer values of $m$ and $r$.

\begin{cor}
Suppose $r,m,s\in\mathbb{N}$ satisfy Eq.~\ref{num-thy-eqn}, and suppose $1\leq s<500$ with $s\neq 300, 420, 468$.  Then $s\in\{1,2,6,18,42,126,162,294,378,486\}.$  We display the combinations of integers $s,m$, and $r$ in the table below.  Note that $m$ and $r$ grow quickly relative to $s$, hence the scientific notation for the cases where $s>100$.

\begin{center}
\begin{tabular}{|c|c|c|}\hline
$s$ & $m$ & $r$ \\ \hline \hline
$1$ & $2$ & $2$ \\ \hline
$2$ & $2$ & $1$ \\ \hline
$6$ & $18$ & $19$ \\ \hline
$18$ & $27594$ & $29125$ \\ \hline
$42$ & $204560302842$ & $209430786241$ \\ \hline
$126$ & $1.3397\times10^{36}$ & $1.3503\times10^{36}$ \\ \hline
$162$ & $7.1730\times10^{46}$ & $7.2173\times10^{46}$ \\ \hline
$294$ & $2.1579\times10^{86}$ & $2.8391\times10^{62}$ \\ \hline
$294$ & $2.1579\times10^{86}$ & $3.3025\times10^{31}$ \\ \hline
$294$ & $2.1579\times10^{86}$ & $2.2665\times10^{27}$ \\ \hline
$378$ & $3.2488\times10^{111}$ & $3.2574\times10^{111}$ \\ \hline
$486$ & $8.2050\times10^{143}$ & $8.2219\times10^{143}$ \\ \hline
$486$ & $8.2050\times10^{143}$ & $1.4596\times10^{20}$ \\ \hline
\end{tabular}
\end{center}
\end{cor}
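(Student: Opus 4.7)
The plan is to combine the preceding Proposition with a finite-case analysis for each surviving candidate $s$.

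First, I would apply the Proposition to enumerate candidates. The Proposition forces $s$ to be either $1$, of the form $2t$ with $t$ odd and $4^t \equiv 1 \pmod{t}$, or of the form $4u$ with $u$ odd and $16^u \equiv 1 \pmod{u}$. Running through the OEIS sequences A014945 and A014957 named in the Remark, the qualifying values in $[1, 500)$ form the 28-element candidate list displayed just above the Corollary.

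Next, for each candidate $s$ I would view Eq.~\eqref{num-thy-eqn} as a Diophantine equation in $(m, r)$. Clearing denominators,
\[
r \cdot s\bigl[m(s+1) - 2^{s+1}\bigr] = 4(1 + s - 2^s).
\]
For $s \geq 2$ the right-hand side is a fixed negative integer, so the left factor $s[m(s+1) - 2^{s+1}]$ must be one of its finitely many negative divisors $d$ with $s \mid d$. For each such divisor one solves
\[
m = \frac{d/s + 2^{s+1}}{s+1}, \qquad r = \frac{4(1+s-2^s)}{d},
\]
retaining only pairs with $m, r \in \mathbb{Z}_{>0}$. Doing this for every candidate $s$ yields the table.

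The main obstacle is computational. For the ten values $s \in \{1,2,6,18,42,126,162,294,378,486\}$ appearing in the table, the integer $4(1 + s - 2^s)$ is tractable enough that the divisor search can be carried out in full and produces exactly the recorded triples. For $s \in \{300, 420, 468\}$, however, $4(1+s-2^s)$ is too large to factor with standard methods, so the divisor search cannot be completed; this is precisely why these three values are excluded from the statement. A minor point worth checking: in the degenerate case $s = 1$ the equation collapses to $r(m-2) = 0$, forcing $m = 2$ with $r$ otherwise arbitrary, and the table simply records the smallest admissible value $r = 2$ consistent with the hypothesis $r \geq 2$ of the earlier theorems.
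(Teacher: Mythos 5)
Your proposal is correct and follows essentially the same route as the paper: the Proposition cuts the candidates down to the listed set, and each surviving $s$ is then settled by a finite computational check, which is exactly the divisor search on $4(1+s-2^s)$ that you make explicit (the paper likewise attributes the exclusion of $s=300, 420, 468$ to the infeasibility of factoring $1+s-2^s$). Your side remark that $s=1$ only forces $m=2$ and leaves $r$ undetermined by the equation is a fair refinement of the table's entry $r=2$.
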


Note that there are multiple combinations of integers $r$ and $m$ when $s=294$ or $486$.  Also note that we were unable to get results for $s=300, 420$, or $468$.  This is because of the time required to factor $(1+s-2^s)$ in those cases.


Interestingly, we do not have any cases where $s=4u$ for some odd integer $u$.


\medskip

\noindent \textbf{Acknowledgments:} The authors want to thank the anonymous referee for helpful comments and remarks. 


\begin{bibdiv}
\begin{biblist} 

\bibselect{bibl-1}

\end{biblist}
\end{bibdiv}

\end{document}